\documentclass[a4paper,11pt]{scrartcl}

\usepackage[utf8]{inputenc} 
\usepackage[T1]{fontenc}
\usepackage{graphicx} 

\usepackage{caption} 

\usepackage{float}
\usepackage{mathpazo}
\usepackage{booktabs} 
\usepackage{array} 
\usepackage{paralist} 
\usepackage{verbatim} 
\usepackage{subfig}
\usepackage{enumitem}
\usepackage{amsmath} 
\usepackage{amssymb}
\usepackage{amsthm}
\usepackage{verbatim}
\usepackage{hyperref}
\usepackage{extarrows}
\usepackage{url}
\usepackage{listings}

\usepackage[a4paper,lmargin={3cm},rmargin={2cm},
tmargin={2.5cm},bmargin = {2.5cm}]{geometry}



\title{The Proximal Alternating Minimization Algorithm for two-block separable convex optimization problems with linear constraints}
\author{Sandy Bitterlich\footnote{Chemnitz University of Technology, Faculty  of Mathematics, Reichenhainer Straße 39, 09126 Chemnitz, Germany, 
email: \href{mailto:sandy.bitterlich@mathematik.tu-chemnitz.de}{sandy.bitterlich@mathematik.tu-chemnitz.de}. Research supported by DFG, project WA922/9-1.} \quad Radu Ioan Bo\c{t} \footnote{University of Vienna, Faculty  of Mathematics, Oskar-Morgenstern-Platz 1, 
A-1090 Vienna, Austria, email: \href{mailto:radu.bot@univie.ac.at}{radu.bot@univie.ac.at}. Research partially supported by FWF, project I 2419-N32.} 
\quad Ern\"{o} Robert Csetnek \footnote{University of Vienna, Faculty  of Mathematics, Oskar-Morgenstern-Platz 1, A-1090 Vienna, 
Austria, email: \href{mailto:ernoe.robert.csetnek@univie.ac.at}{ernoe.robert.csetnek@univie.ac.at}. Research supported by FWF, project P 29809-N32.} 
\\ Gert Wanka \footnote{Chemnitz University of Technology, Faculty  of Mathematics, Reichenhainer Straße 39, 09126 Chemnitz, Germany, 
email: \href{mailto:gert.wanka@mathematik.tu-chemnitz.de}{gert.wanka@mathematik.tu-chemnitz.de}. Research partially supported by DFG, project WA922/9-1.}}
\newcommand{\R}{\mathbb{R}}
\newcommand{\HH}{\mathcal{H}}
\newcommand{\GG}{\mathcal{G}}
\newcommand{\h}{\ensuremath{\mathcal{H}}}
\newcommand{\Z}{\ensuremath{\mathcal{Z}}}

\DeclareMathOperator{\ri}{ri}
\DeclareMathOperator{\sqri}{sqri}
\DeclareMathOperator{\inte}{int}
\DeclareMathOperator{\dom}{dom}
\DeclareMathOperator*{\argmin}{argmin}

\DeclareMathOperator{\Id}{Id}
\DeclareMathOperator*\prox{Prox}%

\newtheorem{theorem}{Theorem}

\newtheorem{algorithm}[theorem]{Algorithm}
\newtheorem{remark}[theorem]{Remark}
\newtheorem{problem}[theorem]{Problem}


\begin{document}
\maketitle  



\pagestyle{headings}

\noindent \textbf{Abstract.} The Alternating Minimization Algorithm (AMA) has been proposed by Tseng to solve convex programming problems 
with two-block separable linear constraints and objectives, whereby (at least) one of the components of the latter is assumed to be strongly convex. The fact that one of the subproblems to be solved within the 
iteration process of AMA does not usually correspond to the calculation of a proximal operator through a closed formula, affects the implementability of the algorithm. In this paper we allow in 
each block of the objective a further smooth convex function and propose a  proximal version of AMA, called Proximal AMA, which is achieved by equipping the algorithm with proximal terms induced by variable metrics. 
For suitable choices of the latter, the solving of the two subproblems in the iterative scheme can be reduced to the computation of proximal operators. We investigate the convergence of the proposed algorithm in a real Hilbert space setting and
illustrate its numerical performances on two applications in image processing and machine learning.\vspace{1ex}

\noindent \textbf{Key Words.} Proximal AMA, Lagrangian, saddle points, subdifferential, convex optimization, Fenchel duality \vspace{1ex}

\noindent \textbf{AMS subject classification.} 47H05, 65K05, 90C25

\section{Introduction and preliminaries}\label{sec1}

The Alternating Minimization Algorithm (AMA) has been proposed by Tseng (see \cite{tseng91}) in order to solve optimization problems of the form
\begin{align}\label{eq:Problem_AMA}
&\inf_{x \in R^n,z \in \R^m} f(x)+g(z),\\
&\quad~~~\text{s.t. } \quad Ax +Bz=b. \nonumber
\end{align}
where $f:\R^n \to \overline{\R}:=\R\cup\{\pm\infty\}$ is  a proper, $\gamma$-strongly convex with $\gamma>0$ (this means that
$f-\frac{\gamma}{2}\|\cdot\|^2$ is convex) and lower semicontinuous function, $g :\R^m \to \overline{\R}$ is a proper, 
convex and lower semicontinuous function, $A \in \mathbb{R}^{r \times n}, B \in \mathbb{R}^{r \times m}$ and $b \in \R^r$. 

For $c > 0$ we consider the augmented Lagrangian associated with problem \eqref{eq:Problem_AMA}
\begin{equation*}
L_c:\R^n \times \R^m \times \R^r \to \overline{\R}, \quad L_c(x,z,p)=f(x)+g(z)+\langle p, b-Ax-Bz\rangle + \frac{c}{2}\|Ax+Bz-b\|^2.
\end{equation*}
The Lagrangian associated with problem \eqref{eq:Problem_AMA} is
\begin{align*}
L :\R^n \times \R^m \times \R^r \to \overline{\R}, \quad L (x,z,p) =  f(x)+g(z)+\langle p, b-Ax-Bz\rangle.
\end{align*}

The Alternating Minimization Algorithm reads:
\begin{algorithm}\label{alg2}(AMA) Choose $p^0 \in \R^r$ and a sequence of stepsizes $(c_k)_{k\geq 0} \subseteq (0,+\infty)$. For all $k \geq 0$ set:
	\begin{align}
	x^k&= \argmin_{x \in \R^n}\left\{f(x)-\langle p^k,Ax\rangle \right\}\label{eq:AMA_update_x}\\
	z^k&\in\argmin_{z \in \R^m}\left\{g(z)-\langle p^k,Bz\rangle+\frac{c_k}{2} \|Ax^k+Bz-b\|^2\right\}\label{eq:AMA_update_z}\\
	p^{k+1}&=p^k+c_k(b-Ax^k-Bz^k).
	\end{align}
\end{algorithm}

The main convergence properties of this numerical algorithm are summarized in the theorem below (see \cite{tseng91}). 

\begin{theorem}
Let $A \neq 0$ and $(x,z) \in \ri(\dom f) \times \ri(\dom g)$ be such that $Ax+Bz=b$.  Assume that the sequence of stepsizes $(c_k)_{k \geq 0}$ satisfies
$$\epsilon \leq c_k \leq \frac{2 \gamma}{\|A\|^2}-\epsilon \ \forall k \geq 0,$$ 
where $\epsilon \in \left (0, \frac{\gamma}{\|A\|^2} \right)$. Let $(x^k, z^k, p^k)_{k \geq 0}$ be the sequence generated by Algorithm \ref{alg2}. Then there exist $x^* \in \R^n$ and an optimal Lagrange multiplier $p^* \in \R^r$ associated
with the constraint $Ax+Bz=b$ such that
\begin{align*}
x^k \to x^*, Bz^k \to b - Ax^*, p^k \rightarrow p^* (k \rightarrow +\infty).
		\end{align*}
If the function $z \mapsto g(z)+\|Bz\|^2$ has bounded level sets, then  $(z^k)_{k \geq 0}$ is bounded and any of its cluster points $z^*$ provides with $(x^*,z^*)$ an optimal solution of \eqref{eq:Problem_AMA}.
\end{theorem}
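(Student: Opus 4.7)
The plan is to exploit a Fejér-type monotonicity of the dual sequence $(p^k)$ with respect to the set $S$ of optimal Lagrange multipliers for the constraint $Ax+Bz=b$. First, the interior point assumption $(x,z)\in\ri(\dom f)\times\ri(\dom g)$ with $Ax+Bz=b$ secures, via Fenchel--Rockafellar duality, the existence of a saddle point $(\bar x,\bar z,\bar p)$ of the Lagrangian $L$; by $\gamma$-strong convexity of $f$ the primal first component $\bar x$ is unique. The saddle-point conditions read $A^\top\bar p\in\partial f(\bar x)$, $B^\top\bar p\in\partial g(\bar z)$ and $A\bar x+B\bar z=b$. The optimality conditions of the AMA subproblems give, from \eqref{eq:AMA_update_x}, $A^\top p^k\in\partial f(x^k)$ and, combining \eqref{eq:AMA_update_z} with the dual update, $B^\top p^{k+1}\in\partial g(z^k)$. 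The strong monotonicity of $\partial f$ induced by strong convexity then implies $\langle A^\top(p^k-\bar p),\,x^k-\bar x\rangle\geq \gamma\|x^k-\bar x\|^2$, while monotonicity of $\partial g$ yields $\langle B^\top(p^{k+1}-\bar p),\,z^k-\bar z\rangle\geq 0$.

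The heart of the proof is a recursive bound on $\|p^k-\bar p\|^2$. Using $A\bar x+B\bar z=b$, write $p^{k+1}-\bar p=(p^k-\bar p)-c_k\bigl(A(x^k-\bar x)+B(z^k-\bar z)\bigr)$, expand the square, and deal with the awkward inner product involving $B(z^k-\bar z)$ by substituting $p^k=p^{k+1}+c_k\bigl(A(x^k-\bar x)+B(z^k-\bar z)\bigr)$ so that the monotonicity inequality holding at $p^{k+1}$ becomes applicable. The remaining algebra reduces to the identity $\|u\|^2-2\langle u,v\rangle=\|u-v\|^2-\|v\|^2$ applied with $u=A(x^k-\bar x)+B(z^k-\bar z)$ and $v=B(z^k-\bar z)$, delivering
\[
\|p^{k+1}-\bar p\|^2 \;\leq\; \|p^k-\bar p\|^2 \;-\; c_k\bigl(2\gamma-c_k\|A\|^2\bigr)\|x^k-\bar x\|^2 \;-\; c_k^2\|B(z^k-\bar z)\|^2.
\]
The stepsize bounds force the middle coefficient to exceed $\epsilon^2\|A\|^2>0$. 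Consequently $\|p^k-\bar p\|$ is nonincreasing (so $(p^k)$ is bounded), and telescoping gives $\sum_{k}\|x^k-\bar x\|^2<\infty$ and $\sum_{k}\|B(z^k-\bar z)\|^2<\infty$. Hence $x^k\to\bar x$, $Bz^k\to b-A\bar x$, and therefore $p^{k+1}-p^k=c_k(b-Ax^k-Bz^k)\to 0$.

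To upgrade boundedness of $(p^k)$ to convergence, let $\tilde p$ be any cluster point, say $p^{k_j}\to\tilde p$. The closed graph of $\partial f$ applied to $A^\top p^{k_j}\in\partial f(x^{k_j})$ with $x^{k_j}\to\bar x$ yields $A^\top\tilde p\in\partial f(\bar x)$. The main obstacle is the corresponding statement for $g$, since $(z^k)$ need not converge; the way out is to start from $g(z)\geq g(z^{k_j})+\langle p^{k_j+1},Bz-Bz^{k_j}\rangle$, take $\limsup$ as $j\to\infty$ (using $p^{k_j+1}\to\tilde p$ and $Bz^{k_j}\to b-A\bar x$), and combine the resulting estimate with Fenchel--Young equality at $\bar x$ to show that $\tilde p$ attains the dual optimal value and hence lies in $S$. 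A standard Opial-type argument---$\|p^k-p^*\|$ converges for every $p^*\in S$ and every cluster point belongs to $S$---then produces $p^k\to p^*$ for some $p^*\in S$. For the final claim, under the level-bounded hypothesis on $z\mapsto g(z)+\|Bz\|^2$, boundedness of $(Bz^k)$ combined with boundedness of $(g(z^k))$, which can be read off from the optimality inequality for $z^k$ in \eqref{eq:AMA_update_z} together with the already-established convergences, forces $(z^k)$ bounded; along a further subsequence $z^{k_j}\to z^*$, so $Bz^*=b-Ax^*$, and the closed graph of $\partial g$ applied to $B^\top p^{k_j+1}\in\partial g(z^{k_j})$ gives $B^\top p^*\in\partial g(z^*)$, showing that $(x^*,z^*)$ is an optimal solution of \eqref{eq:Problem_AMA}.
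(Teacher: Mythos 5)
Your Fej\'{e}r-type inequality is correct as algebra: from $A^*p^k\in\partial f(x^k)$, $B^*p^{k+1}\in\partial g(z^k)$ and the substitution trick you describe one does get $\|p^{k+1}-\bar p\|^2\le\|p^k-\bar p\|^2-c_k(2\gamma-c_k\|A\|^2)\|x^k-\bar x\|^2-c_k^2\|B(z^k-\bar z)\|^2$, and the stepsize window gives the summability statements; this is exactly the specialization ($h_1=h_2=0$, $M_1^k=M_2^k=0$) of the inequality the paper derives for Proximal AMA in Theorem \ref{th:Prox-AMA-convergence}. The genuine gap is your very first step: you postulate a saddle point $(\bar x,\bar z,\bar p)$, i.e.\ primal attainment. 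The hypothesis $(x,z)\in\ri(\dom f)\times\ri(\dom g)$ with $Ax+Bz=b$ yields (for finite optimal value) strong duality and attainment of the \emph{dual}, but not of the primal: there need not exist any $\bar z$ with $B^*\bar p\in\partial g(\bar z)$ and $A\bar x+B\bar z=b$, and your monotonicity step $\langle p^{k+1}-\bar p,B(z^k-\bar z)\rangle\ge 0$ has nothing to act on. Tseng's statement is phrased precisely to avoid this: it claims only $Bz^k\to b-Ax^*$, and primal optimality of cluster points $z^*$ only under the extra level-boundedness assumption. (The paper does not reprove this theorem but cites Tseng; its own convergence proof, for Theorem \ref{th:Prox-AMA-convergence}, takes nonemptiness of the saddle-point set as an explicit hypothesis -- which is what you are silently importing.) The repair is to argue against a dual optimal $\bar p$ only: since $f^*\circ A^*$ is everywhere differentiable, dual optimality gives $\bar x=\nabla f^*(A^*\bar p)$ and some $\bar w\in\partial(g^*\circ B^*)(\bar p)$ with $A\bar x+\bar w=b$, and your computation goes through verbatim with $\bar w$ in place of $B\bar z$, using monotonicity of $\partial(g^*\circ B^*)$ at the pairs $(p^{k+1},Bz^k)$ and $(\bar p,\bar w)$ (note $Bz^k\in\partial(g^*\circ B^*)(p^{k+1})$). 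This is Tseng's dual forward--backward viewpoint.

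There is also a directional gap in your identification of cluster points of $(p^k)$ as multipliers. From $B^*p^{k_j+1}\in\partial g(z^{k_j})$ and lower semicontinuity you can only conclude $g^*(B^*\tilde p)\le\langle\tilde p,b-Ax^*\rangle-\limsup_j g(z^{k_j})$, so certifying that $\tilde p$ attains the dual value still requires the lower bound $\limsup_j g(z^{k_j})\ge v-f(x^*)$ ($v$ the optimal value), which you never establish; the subproblem optimality only yields the opposite bound $\limsup_j g(z^{k_j})\le g(\bar z)$. This can be filled (e.g.\ via $f(x)+g(z)=L(x,z,p^\sharp)+\langle p^\sharp,Ax+Bz-b\rangle$ for a known multiplier $p^\sharp$ together with $f(x^{k_j})\to f(x^*)$, which follows from $A^*p^{k_j}\in\partial f(x^{k_j})$), but it is cleaner, and consistent with the repair above, to avoid it: $Bz^{k_j}\in\partial(g^*\circ B^*)(p^{k_j+1})$, $Bz^{k_j}\to b-Ax^*$ and $p^{k_j+1}\to\tilde p$ give, by closedness of the graph of this maximally monotone operator, $b-Ax^*\in\partial(g^*\circ B^*)(\tilde p)$, which together with $Ax^*=A\nabla f^*(A^*\tilde p)$ says $\tilde p$ is dual optimal. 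With these two repairs, your Opial-type argument in $\R^r$ and your final level-boundedness argument (which is fine, provided the comparison point $\bar z$ is replaced by an arbitrary $z_0\in\dom g$) do complete the proof.
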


The strong convexity of $f$ allows to reduce the minimization problem in \eqref{eq:AMA_update_x} to the calculation of the proximal operator of a proper, convex and lower semicontinuous function. This is
for the minimization problem in \eqref{eq:AMA_update_z}, due to the presence of the linear operator $B$, in general not the case. This fact makes the AMA method not very tractable for implementation issues. With the exception of some very particular cases, one has to use 
a subroutine in order to compute $(z^k)_{k \geq 0}$, a fact which can have a  negative influence on the convergence behaviour of the algorithm. One possibility to avoid this, without
losing the convergence properties of AMA, is to replace \eqref{eq:AMA_update_z} by a proximal step of $g$.  The papers \cite{bete09} and \cite{cope11} provide convincing evidences for the versatility and efficiency of proximal point algorithms for solving
nonsmooth convex optimization problems.

In this paper we address in a real Hilbert space setting a problem of type \eqref{eq:Problem_AMA}, which is obtained by adding in each block of the objective a further smooth convex function. To solve this problem we propose a 
so-called Proximal Alternating Minimization Algorithm (Proximal AMA), which is obtained by inducing in each of the minimization  problems \eqref{eq:AMA_update_x} and \eqref{eq:AMA_update_z} additional proximal terms defined by means of positively semidefinite operators. The two smooth
convex functions in the objective are evaluated via gradient steps. We will show that, for appropriate choices of these operators, the minimization problem in \eqref{eq:AMA_update_z} reduces to the performing of a proximal step.  We perform the convergence analysis 
of the proposed method and show that the generated sequence converges weakly to a saddle point of the Lagrangian associated with the optimization problem under investigation.The numerical performances of Proximal AMA, in particular in comparison with AMA, 
are illustrated on two applications in image processing and machine learning.

A similarity of AMA to the classical ADMM algorithm, introduced by Gabay and Mercier in \cite{game76}, is evident. In \cite{fpst, ST} (see also \cite{babo17, bocs17}) proximal versions of the ADMM algorithm have been proposed and investigated from the point of view of their convergence properties.
Parts of the convergence analysis for Proximal AMA are carried out in a similar spirit to the convergence proofs in these papers. 

In the remainder of this section, we discuss some notations, definitions and basic properties we will use in this paper (see \cite{baco17}). Let $\mathcal{H}$ and $\GG$ be real Hilbert spaces with corresponding inner products $\langle \cdot, \cdot \rangle$ and associated norms 
$\|\cdot\|=\sqrt{\langle \cdot, \cdot \rangle}$. In both spaces we denote by $\rightharpoonup$ the weak convergence and by $\rightarrow$ the strong convergence.

We say that a function $f:\mathcal{H} \to \overline{\mathbb{R}}$  is proper, if $\dom f:=\{x\in \mathcal{H}: f(x)<+\infty\}\neq\emptyset$ and $f(x) > -\infty$ for all $x \in \mathcal{H}$. Let be
\begin{equation*}
	\Gamma(\mathcal{H})=\{f:\mathcal{H} \to \overline{\mathbb{R}}: f \text{ is proper, convex and lower semicontinuous} \}.
\end{equation*}
Let be $f\in \Gamma(\mathcal{H})$. The (Fenchel) conjugate function $f^*:\mathcal{H} \to \overline{\mathbb{R}}$ of $f$ is defined as
\begin{equation*}
f^*(p)=\text{sup}_{x \in \HH}\{\langle p, x \rangle -f(x)  \}  ~ \forall p \in \HH
\end{equation*}
and is a proper, convex and lower semicontinuous function. It also holds $f^{**}=f$, where $f^{**}$ is the conjugate function of $f^*$.
The (convex) subdifferential of $f$ is defines as $\partial f(x)=\{u\in \mathcal{H}: f(y)\geq f(x)+\langle u,y-x\rangle \forall y\in \mathcal{H}\}$, if $f(x) \in \R$,
and as $\partial f(x) = \emptyset$, otherwise.

The infimal convolution of two proper functions $f,g:{\cal H}\rightarrow \overline{\mathbb{R}}$ is the function $f\Box g:{\cal H}\rightarrow\overline{\mathbb{R}}$, 
defined by $(f\Box g)(x)=\inf_{y\in {\cal H}}\{f(y)+g(x-y)\}$. 

The proximal point operator of parameter $\gamma$ of $f$ at $x$, where $\gamma >0$,  is defined as
\begin{equation*}
	\text{Prox}_{\gamma f} : {\cal H} \rightarrow {\cal H}, \quad \text{Prox}_{\gamma f}(x)=\argmin_{y \in \mathcal{H}}\left\{\gamma f(y)+\frac{1}{2}\|y-x\|^2\right\}.
\end{equation*}
According to Moreau's decomposition formula we have
\begin{equation*}
\prox\nolimits_{\gamma f}(x)+\gamma\prox\nolimits_{(1/\gamma)f^*}(\gamma^{-1}x)=x, \  \ \forall x \in \HH.
\end{equation*}

Let $C \subseteq \mathcal{H}$ be a convex and closed set. The strong quasi-relative interior of $C$ is
\begin{equation*}
	\text{sqri}(C)=\left\{x \in C: \cup_{\lambda>0}\lambda(C-x) \text{ is a closed linear subspace of } \mathcal{H}\right\}.
\end{equation*}
We always have $\inte(C)\subseteq \text{sqri}(C)$ and, if $\HH$ is finite dimensional, then $\text{sqri}(C)=\text{ri}(C),$ 
where $\text{ri}(C)$ denotes the relative interior of $C$ and represents the interior of $C$ relative to its affine hull. 

We set
\begin{equation*}
S_+(\HH)=\{M:\HH \to \HH: M \text{ is linear, continuous, self-adjoint and positive semidefinite}\}.
\end{equation*}
For $M \in S_+(\HH)$ we define the seminorm $\|\cdot\|_{M} : \HH \rightarrow [0,+\infty)$,  $\|x\|_M= \sqrt{\langle x, Mx \rangle}$.
We consider the Loewner partial ordering on $S_+(\HH)$, defined for $M_1, M_2 \in \mathcal{S}_+(\mathcal{H})$ by
\[M_1 \succcurlyeq M_2 \Leftrightarrow \|x\|^2_{M_1} \geq \|x\|^2_{M_2} ~ \forall x \in \mathcal{H}.
\]
Furthermore, we define for $\alpha>0$
\begin{equation*}
\mathcal{P}_\alpha(\HH):=\{M\in \mathcal{S}_+(\HH): M \succcurlyeq \alpha \textrm{Id} \},
\end{equation*}
where $\textrm{Id} : \HH \rightarrow \HH, \textrm{Id}(x) = x$ for all $x \in \HH$, denotes the identity operator on $\HH$.

Let $A:\HH \to \GG$ be a linear continuous operator. The operator $A^*:\GG \to \HH$, fulfilling $\langle A^*y,x \rangle = \langle y, Ax \rangle $ for all $x \in \HH$ and $y \in \GG$, denotes the adjoint operator of $A$, while 
$\|A\|:=\sup\{\|Ax\| : \|x\| \leq 1 \}$ denotes the norm of $A$.

\section{The Proximal Alternating Minimization Algorithm}\label{sec2}

The two-block separable optimization problem we are going to investigate has the following formulation.

\begin{problem}\label{probl:prox-AMA-basic_problem}
	Let $\mathcal{H}$, $\GG$ and $\mathcal{K}$ be real Hilbert spaces, $f \in \Gamma(H)$ $\gamma$-strongly convex with $\gamma>0$, 
	$g \in \Gamma(G)$, $h_1:\HH\to \R$ a convex and Fr\'echet differentiable function with $L_1$-Lipschitz continuous gradient, $L_1 \geq 0$, $h_2:\GG \to \R$  a convex and Fr\'{e}chet differentiable functions with $L_2$-Lipschitz continuous gradient, $L_2 \geq 0$, 
	$A:\HH \to \mathcal{K}$ and $B:\GG \to \mathcal{K}$ 
	linear continuous operators such that $A \neq 0$ and $b \in \mathcal{K}$. Consider the following optimization problem with two-block separable objective function and  linear constraints
	\begin{align}\label{opt:Prox-AMA:primal}
	&\min_{x \in \HH, z \in \GG} f(x)+h_1(x)+g(z)+h_2(z).\\
	&~\text{s.t. }Ax+Bz=b \nonumber
	\end{align}
\end{problem}
Notice that we allow the Lipschitz constant of the gradient of the function $h_1$ to be zero. In this case $h_1$ is an affine function. The same applies for the function $h_2$.

The Lagrangian associated with the optimization problem \eqref{opt:Prox-AMA:primal} is
	\begin{align*}
	L :\HH\times \GG \times \mathcal{K} \to \overline{\R}, \quad L(x,z,p)=f(x)+h_1(x)+g(z)+h_2(z)+\langle p, b-Ax-Bz \rangle. 
	\end{align*}

	We say that $(x^*,z^*,p^*) \in \HH \times \GG \times \mathcal{K}$ is a saddle point of the Lagrangian $L$, if
	\[L(x^*,z^*,p) \leq L(x^*,z^*,p^*) \leq L(x,z,p^*)	\]
	holds for all $(x,z,p) \in \HH\times \GG \times \mathcal{K}$.
	
 One can show that $(x^*,z^*,p^*)$ is a saddle point of the Lagrangian $L$ if and only if $(x^*,z^*)$ is an optimal solution of \eqref{opt:Prox-AMA:primal}, 
  $p^*$ is an optimal solution of its Fenchel dual problem
  \begin{equation}\label{Fenchel-dual} \sup_{\lambda\in \mathcal{K}}\{-(f^*\Box h_1^*)(A^*\lambda)-(g^*\Box h_2^*)(B^*\lambda)+\langle \lambda,b\rangle\}, 
  \end{equation}
and the optimal objective values of \eqref{opt:Prox-AMA:primal} and \eqref{Fenchel-dual} coincide. The existence of saddle points for $L$ is guaranteed when \eqref{opt:Prox-AMA:primal} has an optimal solution and, for instance, 
the Attouch-Br\'{e}zis-type condition 
\begin{equation}\label{reg-cond} b\in\sqri (A(\dom f)+B(\dom g))
 \end{equation}
holds (see \cite[Theorem 3.4]{b-hab}). In the finite dimensional setting, this asks for the existence of 
$x \in \ri(\dom f )$ and $z \in \ri(\dom g)$ satisfying $Ax+Bz=b$ and coincides with the assumption used by Tseng in \cite{tseng91}.

The system of optimality conditions for the primal-dual pair of optimization problems \eqref{opt:Prox-AMA:primal}-\eqref{Fenchel-dual} reads:
\begin{equation}\label{opt-cond} A^*p^*-\nabla h_1(x^*) \in \partial f(x^*), \ 
	B^*p^*-\nabla h_2(z^*)\in \partial g(z^*) \ \mbox{ and } Ax^*+Bz^*=b. 
\end{equation}
This means that if \eqref{opt:Prox-AMA:primal} has an optimal solution $(x^*,z^*)$ and a qualification condition, like for instance \eqref{reg-cond}, is fulfilled, 
then there exists an optimal solution $p^*$ of \eqref{Fenchel-dual} such that \eqref{opt-cond} holds, consequently, $(x^*,z^*,p^*)$ is a saddle point of the 
Lagrangian $L$. Conversely, if $(x^*,z^*,p^*)$ is a saddle point of the Lagrangian $L$, thus, $(x^*,z^*,p^*)$ satisfies relation \eqref{opt-cond}, then $(x^*,z^*)$ is an optimal solution of \eqref{opt:Prox-AMA:primal} and
$p^*$ is an optimal solution of \eqref{Fenchel-dual}. 

\begin{remark} If $(x_1^*,z_1^*,p_1^*)$ and $(x_2^*,z_2^*,p_2^*)$ are two saddle points of the Lagrangian $L$, then $x_1^*=x_2^*$. This follows easily by using the strong monotonicity of $\partial f$, the monotonicity of $\partial g$ and the relations in \eqref{opt-cond}.
\end{remark}

In the following we formulate the Proximal Alternating Minimization Algorithm to solve \eqref{opt:Prox-AMA:primal}. To this end, we modify Tseng's AMA by evaluating in each of the two subproblems the functions $h_1$ and $h_2$ via gradient steps, respectively, and by 
introducing proximal terms defined through two sequence of positively semidefinite operators $(M_1^k)_{k \geq 0}$ and $(M_2^k)_{k \geq 0}$. 

\begin{algorithm}(Proximal AMA) \label{alg:prox-AMA-h}
	Let $(M_1^k)_{k \geq 0} \subseteq \mathcal{S}_+(\HH)$ and $(M_2^k)_{k \geq 0} \subseteq \mathcal{S}_+(\GG)$. Choose $(x^0,z^0,p^0) \!\!\in \HH \times \GG \times \mathcal{K}$ and a sequence of stepsizes $(c_k)_{k\geq 0} \subseteq (0,+\infty)$. 
	For all $k\geq 0$ set:
	\begin{align}
		x^{k+1}&=\argmin_{x \in \HH}\left\{f(x)-\langle p^k,Ax\rangle 
		     +\langle x-x^{k}, \nabla h_1(x^{k}) \rangle
		     +\frac{1}{2}\|x-x^{k}\|^2_{M_1^{k}}\right\}\label{eq:prox-AMA-h-x-Update}\\
		z^{k+1}& \in  \argmin_{z \in \GG}\left\{g(z)-\langle p^k,Bz\rangle
			+\frac{c_k}{2} \|Ax^{k+1}+Bz-b\|^2  +\langle z-z^k, \nabla h_2(z^k) \rangle 
			+\frac{1}{2}\|z-z^k\|^2_{M_2^{k}}\right\}  \label{eq:prox-AMA-h-z-Update}\\
		p^{k+1}&=p^k+c_k(b-Ax^{k+1}-Bz^{k+1}).\label{eq:prox-AMA-h-p-Update}
	\end{align}
\end{algorithm}

\begin{remark}\textrm
	The sequence $(z^k)_{k \geq 0}$ is uniquely determined if there exists $\alpha_k > 0$ such 
	that $c_kB^*B+M_2^k \in \mathcal{P}_{\alpha_k}(\GG)$ for all $k \geq 0$. 
	This actually ensures that the objective function in the subproblem \eqref{eq:prox-AMA-h-z-Update} is strongly convex.
\end{remark}

\begin{remark}\label{prox-step}\textrm
Let $k \geq 0$ be fixed and $M_2^k:=\frac{1}{\sigma_k}\text{Id}-c_kB^* B$, where $\sigma_k > 0$ and $\sigma_kc_k\|B\|^2 \leq 1$. Then $M_2^k$ is positively semidefinite and the update of $z^{k+1}$ in the Proximal AMA method becomes a proximal step. Indeed,  \eqref{eq:prox-AMA-h-z-Update} holds if and only if
	\[0 \in \partial g(z^{k+1})+(c_kB^* B + M_2^k)z^{k+1}+c_kB^*(Ax^{k+1}-b)-M_2^kz^{k}+\nabla h_2(z^{k})-B^* p^k\,
	\]
or, equivalently,
	\[0 \in \partial g(z^{k+1})+\frac{1}{\sigma_k}z^{k+1} - \left(\frac{1}{\sigma_k}\Id-c_kB^* B\right)z^{k}+\nabla h_2(z^{k})+
	c_kB^*(Ax^{k+1}-b)-B^* p^k.
	\]
But this is nothing else than
	\begin{align*}
	z^{k+1} &=\argmin_{z \in \GG}\left\{g(z)+\frac{1}{2\sigma_k}\left\|z-\left(z^{k}-\sigma_k\nabla h_2(z^{k})+\sigma_kc_kB^*(b-Ax^{k+1}-Bz^k)+\sigma_kB^* p^k \right)\right\|^2
	\right\}\\
&=\prox\nolimits_{\sigma_k g}\left(z^{k}-\sigma_k\nabla h_2(z^{k})+\sigma_kc_kB^*(b-Ax^{k+1}-Bz^k)+\sigma_kB^* p^k \right).
	\end{align*}
\end{remark}

The convergence of the Proximal AMA method is addressed in the next theorem. 

\begin{theorem}\label{th:Prox-AMA-convergence}
In the setting of Problem \ref{probl:prox-AMA-basic_problem} let the set of the saddle points of the Lagrangian $L$ be nonempty. Assume that $M_1^k-\frac{L_1}{2}\Id \in \mathcal{S}_+(\mathcal{H}), M_1^k \succcurlyeq M_1^{k+1}, M_2^k-\frac{L_2}{2}\Id \in \mathcal{S}_+(\mathcal{G}), M_2^k \succcurlyeq M_2^{k+1}$ for all $k \geq 0$ and that $(c_k)_{k \geq 0}$ is a monotonically decreasing sequence satisfying
	\begin{equation}\label{as:Prox-AMA:c_k}
	\epsilon \leq c_k \leq \frac{2\gamma}{\|A\|^2}-\epsilon \quad \forall k \geq 0,
	\end{equation}
	where $\epsilon \in \left (0, \frac{\gamma}{\|A\|^2}\right)$. If one of the following assumptions: 
	\renewcommand{\labelenumi}{\roman{enumi}}
	\begin{enumerate}
		\item[(i)] there exists $\alpha>0$ such that $M_2^k-\frac{L_2}{2}\Id \in \mathcal{P}_{\alpha}(\mathcal{G})$ for all $k \geq 0$;
		\item[(ii)] there exists $\beta>0$ such that $B^*B\in \mathcal{P}_{\beta}(\mathcal{G})$;
	\end{enumerate}
holds true, then the sequence $(x^k,z^k,p^k)_{k \geq 0}$ generated by Algorithm \ref{alg:prox-AMA-h} converges weakly to a saddle point of the Lagrangian $L$.
\end{theorem}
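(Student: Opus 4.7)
The plan is to derive a Fej\'er-type monotonicity inequality for a suitable energy function built from $(x^k,z^k,p^k)$ measured against a fixed saddle point $(x^*,z^*,p^*)$, and then extract summability of the residuals to pass to the limit. First I would write the first-order optimality conditions for the subproblems \eqref{eq:prox-AMA-h-x-Update} and \eqref{eq:prox-AMA-h-z-Update}, obtaining
\[
A^*p^k-\nabla h_1(x^k)-M_1^k(x^{k+1}-x^k)\in\partial f(x^{k+1}),
\]
\[
B^*p^{k+1}-\nabla h_2(z^k)-M_2^k(z^{k+1}-z^k)\in\partial g(z^{k+1}),
\]
where the second inclusion uses the $p$-update \eqref{eq:prox-AMA-h-p-Update} to absorb the $c_kB^*(Ax^{k+1}+Bz^{k+1}-b)$ term. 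Combining these with the saddle-point relations \eqref{opt-cond} via the $\gamma$-strong monotonicity of $\partial f$ and the monotonicity of $\partial g$ yields one inequality with a $\gamma\|x^{k+1}-x^*\|^2$ term on the left and cross terms on the right involving $p^k-p^*$, $p^{k+1}-p^*$, the gradient differences $\nabla h_i(\cdot)-\nabla h_i(\cdot^*)$, and the metric terms $\langle M_i^k(\cdot^{k+1}-\cdot^k),\cdot^{k+1}-\cdot^*\rangle$.

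Next, I would convert these cross terms into squared (semi)norms. For the primal-dual coupling, I would use the identity $A(x^{k+1}-x^*)+B(z^{k+1}-z^*)=\frac{1}{c_k}(p^k-p^{k+1})$, which follows from \eqref{eq:prox-AMA-h-p-Update} and $Ax^*+Bz^*=b$, together with the polarization $2\langle p^k-p^*,p^k-p^{k+1}\rangle=\|p^k-p^*\|^2-\|p^{k+1}-p^*\|^2+\|p^{k+1}-p^k\|^2$. For the metric terms I would use the three-point identity $2\langle M(u-v),u-w\rangle=\|u-w\|_M^2-\|v-w\|_M^2+\|u-v\|_M^2$. The gradient cross terms are controlled by the Baillon--Haddad-type descent estimate for $h_i$, and the resulting $\tfrac{L_i}{2}$-scaled quadratic residuals are absorbed by the hypothesis $M_i^k-\tfrac{L_i}{2}\Id\in\mathcal S_+$. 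The stepsize bound $c_k\le 2\gamma/\|A\|^2-\epsilon$ enters crucially to dominate $c_k\|A(x^{k+1}-x^*)\|^2\le c_k\|A\|^2\|x^{k+1}-x^*\|^2$ by $2\gamma\|x^{k+1}-x^*\|^2$, leaving a slack proportional to $\epsilon\|A\|^2\|x^{k+1}-x^*\|^2$. Using $c_k$ monotonically decreasing and $M_i^k\succcurlyeq M_i^{k+1}$ to reindex, the upshot should be an inequality of the form
\begin{align*}
&\|x^{k+1}-x^*\|_{M_1^{k+1}}^2+\|z^{k+1}-z^*\|_{M_2^{k+1}}^2+\tfrac{1}{c_{k+1}}\|p^{k+1}-p^*\|^2\\
&\quad\le\|x^k-x^*\|_{M_1^k}^2+\|z^k-z^*\|_{M_2^k}^2+\tfrac{1}{c_k}\|p^k-p^*\|^2-R_k,
\end{align*}
with nonnegative $R_k$ dominating $\epsilon\|A\|^2\|x^{k+1}-x^*\|^2$, $\|z^{k+1}-z^k\|_{M_2^k-\frac{L_2}{2}\Id}^2$, $\|x^{k+1}-x^k\|_{M_1^k-\frac{L_1}{2}\Id}^2$, and a multiple of $\|p^{k+1}-p^k\|^2$.

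From this Fej\'er-type inequality, standard arguments give boundedness of $(x^k,z^k,p^k)$, summability of $R_k$, and consequently $x^k\to x^*$ strongly, $\|p^{k+1}-p^k\|\to 0$, and $\|z^{k+1}-z^k\|_{M_2^k}\to 0$. The $p$-update then forces $B(z^k-z^*)\to 0$. To obtain weak convergence of $(z^k,p^k)$, I would apply an Opial-type argument: any weak cluster point $(\bar z,\bar p)$ satisfies, by passing to the limit in the subdifferential inclusions (using demiclosedness of $\partial g$ together with the strong convergence of $p^{k+1}$ components and $Bz^{k+1}-Bz^*\to 0$), the saddle-point relations \eqref{opt-cond} with $x^*$; since the Fej\'er quantity converges along each cluster point, uniqueness of the weak limit follows. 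In case (i), coercivity of $M_2^k-\tfrac{L_2}{2}\Id$ directly upgrades $\|z^{k+1}-z^k\|_{M_2^k}\to 0$ to $\|z^{k+1}-z^k\|\to 0$ and allows the Opial argument on $(z^k)$ itself; in case (ii), $B^*B\succcurlyeq\beta\Id$ turns $B(z^k-\bar z)\to 0$ into strong control of $z^k-\bar z$, compensating for the possibly degenerate $M_2^k$.

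The main obstacle I expect is the bookkeeping in paragraph two: making every cross term land in a nonnegative residual while simultaneously using up the slack from the $\gamma$-strong convexity of $f$, the stepsize bound $c_k<2\gamma/\|A\|^2$, and the lower bounds $M_i^k\succcurlyeq\tfrac{L_i}{2}\Id$, with nothing left over. A secondary delicate point is the identification of weak cluster points as saddle points, since $\partial g$ is not sequentially weak-weak closed; this is handled by rewriting the inclusion for $z^{k+1}$ in variational form and testing against $z-z^{k+1}$ for arbitrary $z$, exploiting the already-established strong convergences of $x^k$, $p^k$ (weakly), $Bz^k$, and the vanishing of the $M_2^k$-weighted increment.
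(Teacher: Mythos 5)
Your proposal is correct and follows essentially the same route as the paper: the same subproblem optimality conditions combined with the strong monotonicity of $\partial f$, the monotonicity of $\partial g$ and the cocoercivity of $\nabla h_1,\nabla h_2$, leading via the polarization and three-point identities and the stepsize bound to the same Fej\'er-type inequality with summable residuals, followed by identification of weak cluster points and an Opial-type uniqueness argument split into cases (i) and (ii). The only cosmetic deviations are that you keep the $M_1^k$-weighted $x$-term inside the energy (the paper absorbs the $x$-information into the residual $R_k$ only) and that you identify the cluster point through the variational inequality for $g+h_2$ (using the vanishing increments and the strong convergence of $Bz^k$) rather than the paper's equivalent passage to the graph of $\partial(g+h_2)^*$.
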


\begin{proof}
	Let $(x^*,z^*,p^*)$ be a fixed saddle point of the Lagrangian $L$. This means that it fulfils the system of optimality conditions
	\begin{align} 
        & A^*p^*-\nabla h_1(x^*)  \in \partial f(x^*) \label{eq:Prox-AMA-oc1}\\
	& B^*p^*-\nabla h_2(z^*) \in \partial g(z^*) \label{eq:Prox-AMA-oc2}\\
        & Ax^*+Bz^* =b \label{eq:Prox-AMA-const}
	\end{align} 
We start by proving that
	\begin{equation*}
	\sum_{k\geq 0}\|x^{k+1}-x^*\|^2< +\infty, \quad  \sum_{k\geq 0}\|Bz^{k+1}-Bz^*\|^2< +\infty, \quad
	\sum_{k\geq 0}\|z^{k+1}-z^k\|^2_{M_2^{k}-\frac{L_2}{2}\text{Id}}< +\infty
	\end{equation*}
	and that the sequences $(z^k)_{k \geq 0}$ and $(p^k)_{k \geq 0}$ are bounded.
	
Assume that $L_1 >0$ and $L_2 >0$. Let $k \geq 0$ be fixed. Writing the optimality conditions for the subproblems \eqref{eq:prox-AMA-h-x-Update} and \eqref{eq:prox-AMA-h-z-Update} we obtain
	\begin{align}
	A^*p^k-\nabla h_1(x^{k}) + M_1^k(x^{k}-x^{k+1}) & \in \partial f(x^{k+1}) \label{eq:Prox-AMA-ocAlg1}
	\end{align}
and
	\begin{align}
	B^*p^k-\nabla h_2(z^{k}) + c_kB^*(-Ax^{k+1}-Bz^{k+1}+b)+M_2^k(z^{k}-z^{k+1}) &\in \partial g(z^{k+1}), \label{eq:Prox-AMA-ocAlg2}
	\end{align}
respectively.	
Combining \eqref{eq:Prox-AMA-oc1}, \eqref{eq:Prox-AMA-oc2}, \eqref{eq:Prox-AMA-ocAlg1}, \eqref{eq:Prox-AMA-ocAlg2} with the strong monotonicity of $\partial f$ and the monotonicity of $\partial g$, it yields
	\[\langle A^*(p^k-p^*)-\nabla h_1(x^{k})+\nabla h_1(x^*)+M_1^k(x^{k}-x^{k+1}), x^{k+1}-x^* \rangle \geq \gamma \|x^{k+1}-x^*\|^2
	\]
	and
	\[\langle B^*(p^k-p^*)-\nabla h_2(z^{k})+\nabla h_2(z^*)+ c_kB^*(-Ax^{k+1}-Bz^{k+1}+b)+M_2^k(z^{k}-z^{k+1}), z^{k+1}-z^* \rangle \geq 0,
	\]
which after summation lead to
	\begin{align}\label{eq21}
	\langle p^k-p^*,Ax^{k+1}-Ax^*\rangle + \langle p^k-p^*,Bz^{k+1}-Bz^*\rangle & \nonumber\\
+ \langle c_k(-Ax^{k+1}-Bz^{k+1}+b),Bz^{k+1}-Bz^*\rangle & \nonumber \\
- \langle \nabla h_1(x^{k})-\nabla h_1(x^*), x^{k+1}-x^*\rangle
	 -\langle \nabla h_2(z^{k})-\nabla h_2(z^*), z^{k+1}-z^*\rangle& \nonumber \\
	 + \langle M_1^k(x^{k}-x^{k+1}),x^{k+1}-x^*\rangle + \langle M_2^k(z^{k}-z^{k+1}),z^{k+1}-z^*\rangle & \geq \gamma \|x^{k+1}-x^*\|^2.
	\end{align}
According to the Baillon-Haddad-Theorem (see \cite[Corollary 18.16]{baco17}) the gradients of $h_1$ and $h_2$ are $\frac{1}{L_1}$ and $\frac{1}{L_2}$-cocoercive, respectively, thus 
	\begin{align*}
	\langle \nabla h_1(x^*)-\nabla h_1(x^{k}), x^*-x^{k}\rangle\geq \frac{1}{L_1}\|\nabla h_1(x^*)-\nabla h_1(x^{k})\|^2\\
	\langle \nabla h_2(z^*)-\nabla h_2(z^{k}), z^*-z^{k}\rangle\geq \frac{1}{L_2}\|\nabla h_2(z^*)-\nabla h_2(z^{k})\|^2.
	\end{align*}
On the other hand, by taking into account  \eqref{eq:prox-AMA-h-p-Update} and \eqref{eq:Prox-AMA-const}, it holds:
	\begin{align*}
	\langle p^k-p^*,Ax^{k+1}-Ax^*\rangle + \langle p^k-p^*,Bz^{k+1}-Bz^*\rangle&= \langle p^k-p^*, Ax^{k+1}+Bz^{k+1}-b \rangle\\& =  \frac{1}{c_k}\langle p^k-p^*, p^k-p^{k+1}\rangle
	\end{align*}  
By employing the last three relations in \eqref{eq21}, it yields
	\begin{align*}
	\frac{1}{c_k}\langle p^k-p^*, p^k-p^{k+1}\rangle + c_k\langle -Ax^{k+1}-Bz^{k+1}+b, Bz^{k+1}-Bz^* \rangle& \\
	 + \langle M_1^k(x^{k}-x^{k+1}),x^{k+1}-x^*\rangle + \langle M_2^k(z^{k}-z^{k+1}),z^{k+1}-z^*\rangle&\\
	 +\langle \nabla h_1(x^{*})-\nabla h_1(x^{k}), x^{k+1}-x^*\rangle+\langle \nabla h_1(x^{*})-\nabla h_1(x^{k}),x^*- x^{k}\rangle&\\
	 -\frac{1}{L_1}\|\nabla h_1(x^*)-\nabla h_1(x^{k})\|^2
	 +\langle \nabla h_2(z^{*})-\nabla h_2(z^{k}), z^{k+1}-z^*\rangle &\\
	 +\langle \nabla h_2(z^{*})-\nabla h_2(z^{k}), z^*-z^{k}\rangle-\frac{1}{L_2}\|\nabla h_2(z^*)-\nabla h_2(z^{k})\|^2& \geq \gamma\|x^{k+1}-x^*\|^2,
	\end{align*}
which, after expressing the inner products by means of norms, becomes
	\begin{align*}
	\frac{1}{2c_k}\left(\|p^k-p^*\|^2+\|p^k-p^{k+1}\|^2-\|p^{k+1}-p^*\|^2\right)&\\
	+\frac{c_k}{2}\left(\|Ax^*-Ax^{k+1}\|^2-\|b-Ax^{k+1}-Bz^{k+1}\|^2-\|Ax^*+Bz^{k+1}-b\|^2\right)&\\
	+\frac{1}{2}\left(\|x^{k}-x^*\|^2_{M_1^k}-\|x^{k}-x^{k+1}\|^2_{M_1^k}-\|x^{k+1}-x^*\|^2_{M_1^k}\right)&\\
	+\frac{1}{2}\left(\|z^{k}-z^*\|^2_{M_2^k}-\|z^{k}-z^{k+1}\|^2_{M_2^k}-\|z^{k+1}-z^*\|^2_{M_2^k}\right)&\\
	+\langle \nabla h_1(x^{*})-\nabla h_1(x^{k}), x^{k+1}-x^{k}\rangle
	-\frac{1}{L_1}\|\nabla h_1(x^*)-\nabla h_1(x^{k})\|^2 &\\
	+\langle \nabla h_2(z^{*})-\nabla h_2(z^{k}), z^{k+1}-z^{k}\rangle
	-\frac{1}{L_2}\|\nabla h_2(z^*)-\nabla h_2(z^{k})\|^2 & \geq \gamma\|x^{k+1}-x^*\|^2.
	\end{align*}
	Using again \eqref{eq:prox-AMA-h-p-Update}, the inequality $\|Ax^*-Ax^{k+1}\|^2\leq \|A\|^2\|x^*-x^{k+1}\|^2$ and the following expressions 
	\begin{align*}
	\langle \nabla h_1(x^*)-\nabla h_1(x^{k}), x^{k+1}-x^{k}\rangle - \frac{1}{L_1}\|\nabla h_1(x^*)-\nabla h_1(x^{k})\|^2\\
	=-L_1\left \|\frac{1}{L_1}(\nabla h_1(x^*)-\nabla h_1(x^{k}))+\frac{1}{2}(x^{k}-x^{k+1}) \right \|^2+\frac{L_1}{4}\|x^{k}-x^{k+1}\|^2,
	\end{align*}
and
\begin{align*}
	\langle \nabla h_2(x^*)-\nabla h_2(z^{k}), z^{k+1}-z^{k}\rangle - \frac{1}{L_2}\|\nabla h_2(z^*)-\nabla h_2(z^{k})\|^2\\
	=-L_2\left \|\frac{1}{L_2}(\nabla h_2(z^*)-\nabla h_2(z^{k}))+\frac{1}{2}(z^{k}-z^{k+1})\right \|^2+\frac{L_2}{4}\|z^{k}-z^{k+1}\|^2,
	\end{align*}
it yields
	\begin{align*}
	\frac{1}{2c_k}\|p^{k+1}-p^*\|^2+\frac{1}{2}\|z^{k+1}-z^*\|^2_{M_2^k}& \leq \\
	\frac{1}{2c_k}\|p^k-p^*\|^2+\frac{1}{2}\|z^{k}-z^*\|^2_{M_2^k}-\frac{c_k}{2}\|Ax^*+Bz^{k+1}-b\|^2&\\
	-\frac{1}{2}\|z^{k}-z^{k+1}\|^2_{M_2^k}-\left(\gamma-\frac{c_k}{2}\|A\|^2\right)\|x^{k+1}-x^*\|^2-\frac{1}{2}\|x^{k}-x^{k+1}\|^2_{M_1^k}&\\
	-L_1\left\|\frac{1}{L_1}(\nabla h_1(x^*)-\nabla h_1(x^{k}))+\frac{1}{2}(x^{k}-x^{k+1})\right\|^2+\frac{L_1}{4}\|x^{k}-x^{k+1}\|^2&\\
	-L_2\left\|\frac{1}{L_2}(\nabla h_2(z^*)-\nabla h_2(z^{k}))+\frac{1}{2}(z^{k}-z^{k+1})\right\|^2+\frac{L_2}{4}\|z^{k}-z^{k+1}\|^2&.
	\end{align*}
Finally, by using the monotonicity of $(M_2^k)_{k\geq 0}$ and of $(c_k)_{k\geq 0}$, we obtain
	\begin{equation}\label{eq:Prox-AMA-inequality-monotonicity-M2k}
	\|p^{k+1}-p^*\|^2+c_{k+1}\|z^{k+1}-z^*\|^2_{M_2^{k+1}} \leq 
	\|p^k-p^*\|^2+c_k\|z^{k}-z^*\|^2_{M_2^k} - R_k,
	\end{equation} 
	where 
	\begin{align*}
	R_k:= & \ c_k\left(2\gamma-c_k\|A\|^2\right)\|x^{k+1}-x^*\|^2 +c_k^2\|Bz^{k+1}-Bz^*\|^2 + \\
	& \ c_k\|z^{k}-z^{k+1}\|^2_{M_2^k-\frac{L_2}{2}\Id}+c_k\|x^{k}-x^{k+1}\|^2_{M_1^k-\frac{L_1}{2}\Id} +\\
	& \ 2c_kL_1\left\|\frac{1}{L_1}(\nabla h_1(x^*)-\nabla h_1(x^{k}))+\frac{1}{2}(x^{k}-x^{k+1})\right\|^2 +\\
	& \ 2c_kL_2\left\|\frac{1}{L_2}(\nabla h_2(z^*)-\nabla h_2(z^{k}))+\frac{1}{2}(z^{k}-z^{k+1})\right\|^2.
	\end{align*}
If $L_1 =0$ (and, consequently, $\nabla h_1$ is constant) and $L_2 >0$, then, by using the same arguments, we obtain again \eqref{eq:Prox-AMA-inequality-monotonicity-M2k}, but with
\begin{align*}
	R_k:= & \ c_k\left(2\gamma-c_k\|A\|^2\right)\|x^{k+1}-x^*\|^2 +c_k^2\|Bz^{k+1}-Bz^*\|^2 + \\
	& \ c_k\|z^{k}-z^{k+1}\|^2_{M_2^k-\frac{L_2}{2}\Id}+c_k\|x^{k}-x^{k+1}\|^2_{M_1^k} +\\
	& \ 2c_kL_2\left\|\frac{1}{L_2}(\nabla h_2(z^*)-\nabla h_2(z^{k}))+\frac{1}{2}(z^{k}-z^{k+1})\right\|^2.
	\end{align*}

If $L_2 = 0$ (and, consequently, $\nabla h_2$ is constant) and $L_2 >0$, then, by using the same arguments, we obtain again \eqref{eq:Prox-AMA-inequality-monotonicity-M2k}, but with
\begin{align*}
	R_k:= & \ c_k\left(2\gamma-c_k\|A\|^2\right)\|x^{k+1}-x^*\|^2 +c_k^2\|Bz^{k+1}-Bz^*\|^2 + \\
	& \ c_k\|z^{k}-z^{k+1}\|^2_{M_2^k}+c_k\|x^{k}-x^{k+1}\|^2_{M_1^k-\frac{L_1}{2}\Id} +\\
	& \ 2c_kL_1\left\|\frac{1}{L_1}(\nabla h_1(x^*)-\nabla h_1(x^{k}))+\frac{1}{2}(x^{k}-x^{k+1})\right\|^2.
	\end{align*}
Relation \eqref{eq:Prox-AMA-inequality-monotonicity-M2k} follows even if $L_1=L_2=0$, but with
\begin{align*}
	R_k:= & \ c_k\left(2\gamma-c_k\|A\|^2\right)\|x^{k+1}-x^*\|^2 +c_k^2\|Bz^{k+1}-Bz^*\|^2 + \\
	& \ c_k\|z^{k}-z^{k+1}\|^2_{M_2^k}+c_k\|x^{k}-x^{k+1}\|^2_{M_1^k}.
	\end{align*}
Notice that, due to $M_1^{k}-\frac{L_1}{2}\Id \in \mathcal{S}_+(\mathcal{H})$ and $M_2^{k}-\frac{L_2}{2}\Id \in \mathcal{S}_+(\mathcal{G})$, all summands in $R_k$ are nonnegative.

 Let be $N \geq 0$ fixed. By telescoping we obtain
	\begin{equation*}
	\|p^{N+1}-p^*\|^2+c_N\|z^{N+1}-z^*\|^2_{M_2^{N+1}} \leq \|p^0-p^*\|^2+c_0\|z^0-z^*\|_{M_2^0}-\sum_{k=0}^{N}	R_k.
	\end{equation*}
From \eqref{eq:Prox-AMA-inequality-monotonicity-M2k} we also obtain that
\begin{equation}\label{eq:Prox-AMA_lim_p^k_z^k}
	\exists\lim_{k \rightarrow \infty} \left (\|p^k-p^*\|^2+c_k\|z^{k}-z^*\|^2_{M_2^{k}}\right),
	\end{equation}
thus  $(p^k)_{k \geq 0}$ is bounded, and $\sum_{k \geq 0} R_k < + \infty$.

Taking \eqref{as:Prox-AMA:c_k} into account we have $c_k(2\gamma-c_k\|A\|^2)\geq \varepsilon^2\|A\|^2$ for all $k \geq 0$. Therefore
	\begin{equation}
	\sum_{k\geq 0}\|x^{k+1}-x^*\|^2< +\infty, \quad \sum_{k\geq 0}\|Bz^{k+1}-Bz^*\|^2< +\infty
	\end{equation}
	and
	\begin{equation}\label{eq:Prox-AMA-sums-x^k-z^k}
	\sum_{k\geq 0}\|z^{k+1}-z^{k}\|^2_{M_2^{k}-\frac{L_2}{2}\Id}< +\infty.
	\end{equation}
From here we have
	\begin{align}
	x^k \rightarrow x^*, \quad Bz^k \rightarrow Bz^* \ (k \rightarrow +\infty),\label{eq:Prox-AMA-conv-Bz^k}
	\end{align}
which, by using \eqref{eq:prox-AMA-h-p-Update} and \eqref{eq:Prox-AMA-const},  lead to
	\begin{equation} \label{eq:Prox-AMA-conv-p^k-p^k-1}
	p^{k}-p^{k+1} \rightarrow 0 \ (k \rightarrow +\infty). 
	\end{equation}
	
Suppose that assumption (i) holds true, namely, that there exists $\alpha > 0$ such that $M_2^k-\frac{L_2}{2}\Id \in \mathcal{P}_\alpha(\GG)$ for all $k \geq 0$. From \eqref{eq:Prox-AMA_lim_p^k_z^k} it follows that $(z^k)_{k \geq 0}$ is bounded, while \eqref{eq:Prox-AMA-sums-x^k-z^k} ensures that 
	\begin{equation}\label{eq:Prox-AMA-conv-z^k-z^k-1}
	z^{k+1}-z^{k} \to 0 \ (k \rightarrow +\infty).
	\end{equation}

In the following we show that each weak sequential cluster point of $(x^k,z^k,p^k)_{k\geq 0}$ (notice that the sequence is bounded) is a saddle point of $L$. Let be $(\bar{z},\bar{p}) \in  \GG \times \mathcal{K}$ such that the subsequence $(x^{k_j}, z^{k_j}, p^{k_j})_{j\geq 0}$ converges weakly  to $(x^*,\bar{z},\bar{p})$ as $j\rightarrow+\infty$.
	From \eqref{eq:Prox-AMA-ocAlg1} we have
	\begin{equation*}
	A^*p^{k_j}-\nabla h_1(x^{k_j}) + M_1^{k_j}(x^{k_j}-x^{k_j+1})  \in \partial f(x^{k_j+1}) \ \forall j \geq 1.
	\end{equation*}
Since $x^{k_j}$ converges strongly to $x^*$ and $p^{k_j}$ converges weakly to a $\bar{p}$ as $j \to +\infty$, using the continuity of $\nabla h_1$ and the closedness of the graph of the convex subdifferential of $f$ in the strong-weak topology (see \cite[Proposition 20.33]{baco17}), it follows
	\begin{equation*}
	A^*\bar{p}-\nabla h_1(x^*)  \in \partial f(x^*).
	\end{equation*}
	From \eqref{eq:Prox-AMA-ocAlg2} we have for all $j \geq 0$
	\begin{equation*}
	B^*p^{k_j}-\nabla h_2(z^{k_j}) + c_{k_j}B^*(-Ax^{k_j+1}-Bz^{k_j+1}+b)+M_2^{k_j}(z^{k_j}-z^{k_j+1}) \in \partial g(z^{k_j+1}),
	\end{equation*}
which is equivalent to
	\begin{align*}
	B^*p^{k_j}+\nabla h_2(z^{k_j+1})-\nabla h_2(z^{k_j}) +  c_{k_j}B^*(-Ax^{k_j+1}-Bz^{k_j+1}+b)+M_2^{k_j}(z^{k_j}-z^{k_j+1})\nonumber\\ 
	\in \partial (g+h_2)(z^{k_j+1})
	\end{align*}
and further to
	\begin{align}\label{eq34}
	z^{k_j+1} \in \partial(g+h_2)^*\Big (B^*p^{k_j} & +  \nabla h_2(z^{k_j+1})-\nabla h_2(z^{k_j}) \nonumber \\
& + c_{k_j}B^*(-Ax^{k_j+1}-Bz^{k_j+1}+b)+M_2^{k_j}(z^{k_j}-z^{k_j+1})\Big).
	\end{align}
By denoting for all $j \geq 0$
	\begin{align*}
	v^j &:=z^{k_j+1}, u^j : =p^{k_j},\\
w^j& :=\nabla h_2(z^{k_j+1})-\nabla h_2(z^{k_j})+c_{k_j}B^*(-Ax^{k_j+1}-Bz^{k_j+1}+b)+M_2^{k_j}(z^{k_j}-z^{k_j+1}),
	\end{align*}
\eqref{eq34} reads
	\begin{equation*}
	v^j \in \partial (g+h_2)^*(B^*u^j+w^j) \ \forall j \geq 0.
	\end{equation*}
According to \eqref{eq:Prox-AMA-conv-z^k-z^k-1} we have
	\begin{align*}
	v^j &\rightharpoonup \bar{z}, \quad u^j \rightharpoonup \bar{p} \ (j \rightarrow +\infty)
	\end{align*}
and, 	by taking into account \eqref{eq:Prox-AMA-conv-Bz^k}, it holds
\begin{equation*}
	Bv^j \to B \bar z =  Bz^* \ (j \rightarrow +\infty).
	\end{equation*}
Combining \eqref{eq:Prox-AMA-conv-z^k} with the Lipschitz continuity of $\nabla h_2$,  \eqref{eq:Prox-AMA-conv-p^k-p^k-1}, \eqref{eq:Prox-AMA-conv-z^k-z^k-1}
	and \eqref{eq:prox-AMA-h-p-Update}, one can easily see that
	\begin{equation*}
	w^j \to 0 \ (j \rightarrow +\infty).
	\end{equation*}
Due to the monotonicity of the subdifferential  it holds
	\begin{equation*}
	\langle v^j-v,B^*u^j+w^j-u\rangle \geq 0 \ \forall (u,v) \mbox{ in the graph of }\partial(g+h_2)^* \mbox{ and }\ \forall j\geq 0,
	\end{equation*}
	which is equivalent to
	\begin{equation*}
		\langle Bv^j-Bv,u^j \rangle+ \langle v^j-v,w^j-u\rangle \geq 0 \ \forall (u,v) \mbox{ in the graph of }\partial(g+h_2)^* \mbox{ and }\ \forall j\geq 0.
	\end{equation*}
We let $j$ converge to $+\infty$ and receive
	\begin{equation*}
	\langle \bar{z}-v,B^*\bar{p} \rangle- \langle \bar{z}-v,u\rangle \geq 0 \ \ \forall (u,v) \mbox{ in the graph of }\partial(g+h_2)^*
	\end{equation*}
or, equivalently,
	\begin{equation*}
	\langle \bar{z}-v,B^*\bar{p} -u \rangle \geq 0 \ \ \forall (u,v) \mbox{ in the graph of }\partial(g+h_2)^*.
	\end{equation*}
The maximal monotonicity of the convex subdifferential of $(g+h_2)^*$ ensures that $\bar{z} \in \partial (g+h_2)^*(B^*\bar{p})$, which is the same as 
	$B^*\bar{p} \in \partial (g+h_2)(\bar{z})$. In other words, $B^*\bar{p}-\nabla h_2(\bar{z}) \in \partial g(\bar{z})$. Finally, from \eqref{eq:prox-AMA-h-p-Update} and \eqref{eq:Prox-AMA-conv-p^k-p^k-1} it follows that $Ax^* + B\bar z = b$. In conclusion, $(x^*, \overline z,\bar{p})$ is a saddle point of the Lagrangian $L$. 
	
In the following we show that sequence $(x^k,z^k,p^k)_{k \geq 0}$ converges weakly. Let $(x^*,z_1,p_1)$ and $(x^*,z_2,p_2)$ be two weak sequential cluster points $(x^k,z^k,p^k)_{k \geq 0}$. 
Then there exists $(k_s)_{s \geq 0}$, $k_s \to + \infty$ as $s \to + \infty$, such that the subsequence $(x^{k_s}, z^{k_s}, p^{k_s})_{s \geq 0}$ converges weakly to $(x^*,z_1,p_1)$ as $s \to + \infty$. 
Furthermore there exists $(k_t)_{t \geq 0}$, $k_t \to + \infty$ as $t \to + \infty$, such that that a subsequence $(x^{k_t}, z^{k_t}, p^{k_t})_{t \geq 0}$ converges weakly to $(x^*,z_2,p_2)$ as $t \to + \infty$. 
As seen before, $(x^*,z_1,p_1)$ and $(x^*,z_2,p_2)$ are both saddle points of the Lagrangian $L$.

From \eqref{eq:Prox-AMA_lim_p^k_z^k}, which is fulfilled for every saddle point of the Lagrangian $L$, we obtain
		\begin{equation}\label{eq:Prox-AMA_lim_p^k_z^k2}
		\exists \lim_{k \to + \infty}(\|p^k-p_1\|^2-\|p^k-p_2\|^2+c_k\|z^{k}-z_1\|^2_{M_2^k}-c_k\|z^{k}-z_2\|^2_{M_2^k}):=T.
		\end{equation}
For all $k \geq 0$ we have
		\begin{align*}
		\|p^k-p_1\|^2-\|p^k-p_2\|^2+c_k\|z^{k}-z_1\|^2_{M_2^k}-c_k\|z^{k}-z_2\|^2_{M_2^k} & = \\
		\|p_2-p_1\|^2+2\langle p_k-p_2, p_2-p_1 \rangle + c_k\|z_2-z_1\|_{M_2^k}^2+2c_k\langle z_{k}-z_2, z_2-z_1 \rangle_{M_2^k}. &
		\end{align*}
 Since $M_2^k \geq \left (\alpha + \frac{L_2}{2} \right) \Id $ for all $k \geq 0$ and $(M_2^k)_{k \geq 0}$
is a monotone sequence of symmetric operators, there exists a symmetric operator $M \geq \left (\alpha + \frac{L_2}{2} \right) \Id$ such that  $(M_2^k)_{k \geq 0}$ converges pointwise to $M$ in the strong topology as $k \rightarrow +\infty$. Furthermore, let $c:=\lim_{k \rightarrow +\infty} c_k >0$.
Taking the limits in \eqref{eq:Prox-AMA_lim_p^k_z^k2} along the subsequences $(k_s)_{s \geq 0}$ and $(k_t)_{t \geq 0}$, it yields
		\begin{equation*}
		T=-\|p_2-p_1\|^2-c\|z_2-z_1\|^2_M
		\end{equation*}
		and
		\begin{equation*}
		T=\|p_2-p_1\|^2+c\|z_2-z_1\|^2_M,
		\end{equation*}
thus
		\begin{equation*}
		\|p_2-p_1\|^2+c\|z_2-z_1\|^2_M=0.
		\end{equation*} 
It follows that $p_1=p_2$ and $z_1=z_2$, thus $(x^k, z^k, p^k)_{k\geq 0}$ converges weakly to a saddle point of the Lagrangian $L$.
	
Assume now that condition (ii) holds, namely, that there exists $\beta > 0$ such that $B^*B \in \mathcal{P}_\beta(\HH)$. Then $\beta\|z_1-z_2\|^2 \leq \|Bz_1-Bz_2\|^2$ for all $z_1, z_2 \in \GG$, which means that, if $(x_1^*,z_1^*,p_1^*)$ and $(x_2^*,z_2^*,p_2^*)$ are two saddle points of the Lagrangian $L$, then $x_1^*=x_2^*$ and $z_1^*=z_2^*$. 

For the saddle point $(x^*, z^*, p^*)$ of the Lagrangian $L$ we fixed at the beginning of the proof and the generated sequence $(x^k, z^k, p^k)_{k \geq 0}$ we receive because of \eqref{eq:Prox-AMA-conv-Bz^k} that
	\begin{equation}\label{eq:Prox-AMA-conv-z^k}
		x^k \rightarrow x^*, \quad z^k \rightarrow z^*, \quad p^{k} - p^{k+1} \rightarrow 0 \ (k \rightarrow +\infty).
	\end{equation}
Moreover,
\begin{align*}
\exists\lim_{k \rightarrow \infty} \|p^k-p^*\|^2.
\end{align*}
The remainder of the proof follows in analogy to the one given under assumption (i).
\end{proof}

If $h_1=0$ and $h_2=0$,  and $M_1^k=0$ and $M_2^k = 0$ for all $k \geq 0$,  then the Proximal AMA method becomes the AMA method as it has been proposed by Tseng in \cite{tseng91}. According to Theorem \ref{th:Prox-AMA-convergence} (for $L_1=L_2=0$), the generated sequence converges weakly to a saddle point of the Lagrangian, if there exists $\beta >0$ such that $B^*B\in \mathcal{P}_{\beta}(\mathcal{G})$. In finite dimensional spaces this condition reduces to assuming that $B$ is injective.

\section{Numerical experiments}\label{sec3}

In this section we compare the numerical performances of AMA  and  Proximal AMA on two applications in  image processing and machine learning. The numerical experiments were performed on a computer with an Intel Core i5-3470 CPU and 8 GB DDR3 RAM. 

\subsection{Image denoising and deblurring}\label{subsec31}

We addressed an image denoising and deblurring problem formulated as a nonsmooth convex optimization problem (see \cite{bohe14, hend14, ruos92})
\begin{equation}\label{eq:Problem_Image}
\inf_{x \in \R^n} \left\{\frac{1}{2}\|Ax-b\|^2 + \lambda \text{TV}(x)\right\},
\end{equation}
where $A \in \R^{n\times n}$ represents a blur operator, $b \in \R^n$ is a given blurred and noisy image, $\lambda>0$ is a regularization parameter and $\text{TV}:\R^n\to \R$ is a discrete total variation functional. 
The vector $x \in \R^n$ is the vectorized image $X \in \R^{M \times N}$, where $n=MN$ and $x_{i,j} := X_{i,j}$ stands for the normalized value of the pixel in the $i$-th row and the $j$-th column, for $1 \leq i \leq M, 1 \leq j \leq N$. 

Two choices have been considered for the discrete total variation, namely, the  isotropic total variation $\text{TV}_{\text{iso}}:\R^n\to \R,$
\[\text{TV}_{\text{iso}}(x)=\sum_{i=1}^{M-1}\sum_{j=1}^{N-1}\sqrt{(x_{i+1,j}-x_{i,j})^2+(x_{i,j+1}-x_{i,j})^2}
+\sum_{i=1}^{M-1}|x_{i+1,N}-x_{i,j}|+\sum_{j=1}^{N-1}|x_{M,j+1}-x_{M,j}|,
\]
and the anisotropic total variation $\text{TV}_{\text{aniso}}:\R^n\to \R,$
\[\text{TV}_{\text{aniso}}(x)=\sum_{i=1}^{M-1}\sum_{j=1}^{N-1}|x_{i+1,j}-x_{i,j}|+|x_{i,j+1}-x_{i,j}|
+\sum_{i=1}^{M-1}|x_{i+1,N}-x_{i,j}|+\sum_{j=1}^{N-1}|x_{M,j+1}-x_{M,j}|.
\]
Consider the linear operator $L:\R^n\to \R^n \times \R^n, x_{i,j} \mapsto \left(L_1x_{i,j},L_2x_{i,j}\right)$, where
\[L_1x_{i,j}=\begin{cases}
x_{i+1,j}-x_{i,j}, & \text{if } i<M\\
0, &\text{if } i=M
\end{cases}
\text{ and }
L_2x_{i,j}=\begin{cases}
x_{i,j+1}-x_{i,j}, & \text{if } j<N\\
0, &\text{if } j=N
\end{cases}
\]
One can easily see that $\|L\|^2\leq8$. The optimization problem \eqref{eq:Problem_Image} can be written as
\begin{equation}\label{eq:Problem_Image_Primal}
\inf_{x \in \R^n}\left\{f(Ax)+g(Lx)\right\},
\end{equation}
where $f:\R^n \to \R, f(x)=\frac{1}{2}\|x-b\|^2$, and $g:\R^n \times \R^n \to \R$, 
$g(y,z)=\lambda\|(y,z)\|_1$, for the anisotropic total variation, and $g(y,z)=\lambda\|(y,z)\|_{\times}:=\lambda\sum_{i=1}^M\sum_{j=1}^N\sqrt{y_{i,j}^2+z_{i,j}^2}$, for the isotropic total variation.

We will solve the Fenchel dual problem of \eqref{eq:Problem_Image_Primal} by AMA and Proximal AMA and will determine in this way an optimal solution of the primal problem, too. 
The reason for this strategy is that the Fenchel dual problem of \eqref{eq:Problem_Image_Primal}  is a convex optimization problem with two-block separable linear constraints and objective function.

Indeed, the Fenchel dual problem of \eqref{eq:Problem_Image_Primal} is (see \cite{baco17,b-hab})
\begin{align}\label{eq:Problem_Image_Dual}
\inf_{p \in \R^n, q \in \R^n \times \R^n}\left\{f^*(p) + g^*(q)\right\}.\\
\text{s.t. }A^* p+L^* q=0. \nonumber
\end{align}
Since $f$ and $g$ have full domains, strong duality for \eqref{eq:Problem_Image_Primal}-\eqref{eq:Problem_Image_Dual} holds. 

We notice that $f^*(p)=\frac{1}{2}\|p\|^2+\langle p, b\rangle$ for all $p \in \R^n$, hence $f^*$ is $1$-strongly convex. We choose $M_1^k = 0$ and $M_2^k=\frac{1}{\sigma_k}\text{I}-c_kL^* L$ (see Remark \ref{prox-step}) for every $k \geq 0$. The iterative scheme of Proximal AMA becomes for all $k \geq 0$:
	\begin{align*}
	p^{k+1}&=Ax^k-b\\
	q^{k+1}&=\text{Prox}_{\sigma_kg^*}\left(q^{k}+\sigma_kc_kL(-A^* p^{k+1} - L^*q^k)+\sigma_kL(x^k)\right)\\
	x^{k+1}&=x^k+c_k(-A^* p^{k+1}-L^* q^{k+1}).
	\end{align*}

In the case of the anisotropic total variation, the conjugate of $g$ is the indicator function of the set $[-\lambda,\lambda]^n\times[-\lambda,\lambda]^n$, thus $\text{Prox}_{\sigma_kg^*}$ is the projection operator $\mathcal{P}_{[-\lambda,\lambda]^n\times[-\lambda,\lambda]^n}$ on the set $[-\lambda,\lambda]^n\times[-\lambda,\lambda]^n$. The iterative scheme becomes for all $k \geq 0$:
\begin{align*}
	p^{k+1}&=Ax^k-b\\
	(q_1^{k+1},q_2^{k+1})&=\mathcal{P}_{[-\lambda,\lambda]^n\times[-\lambda,\lambda]^n}\left((q_1^{k},q_2^{k})+c_k\sigma_k(-LA^* p^{k+1} - LL^* (q_1^{k},q_2^{k}))+\sigma_kLx^k\right)\\
	x^{k+1}&=x^k+c_k\left(-A^* p^{k+1}-L^*(q_1^{k+1},q_2^{k+1})\right).
\end{align*}

In  the case of the isotropic total variation, the conjugate of $g$ is the indicator function of the set
$S:=\left\{(v,w)\in \R^n\times \R^n:\max_{1\leq i\leq n}\sqrt{v_i^2+w_i^2}\leq \lambda \right\}$, thus 
$\text{Prox}_{\sigma_kg^*}$ is the projection operator $P_S : \R^n \times \R^n \rightarrow S$ on $S$, which reads
\[
(v_i,w_i) \mapsto \lambda\frac{(v_i,w_i)}{\max\left\{\lambda, \sqrt{v_i^2+w_i^2}\right\}}, \quad i=1,...,n.
\]
The iterative scheme becomes for all $k \geq 0$:
\begin{align*}
	p^{k+1}&=Ax^k-b\\
	(q_1^{k+1},q_2^{k+1})&=P_S\left((q_1^{k},q_2^{k})+c_k\sigma_k(-LA^* p^{k+1} - LL^* (q_1^{k},q_2^{k}))+\sigma_kLx^k\right)\\
	x^{k+1}&=x^k+c_k\left(-A^* p^{k+1}-L^*(q_1^{k+1},q_2^{k+1})\right).
\end{align*}

\begin{figure}[ht]
	\centering
	\captionsetup[subfigure]{position=top}
	\subfloat[Original image "office\_4"]{\includegraphics*[width=0.32\linewidth]{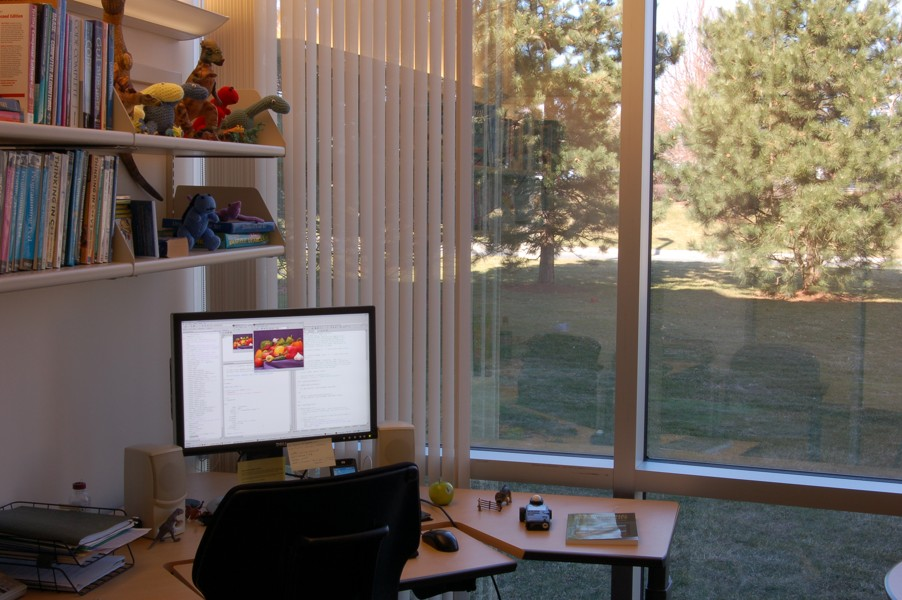}} \hspace{0.2cm}
	\subfloat[Blurred and noisy image]{\includegraphics*[width=0.32\linewidth]{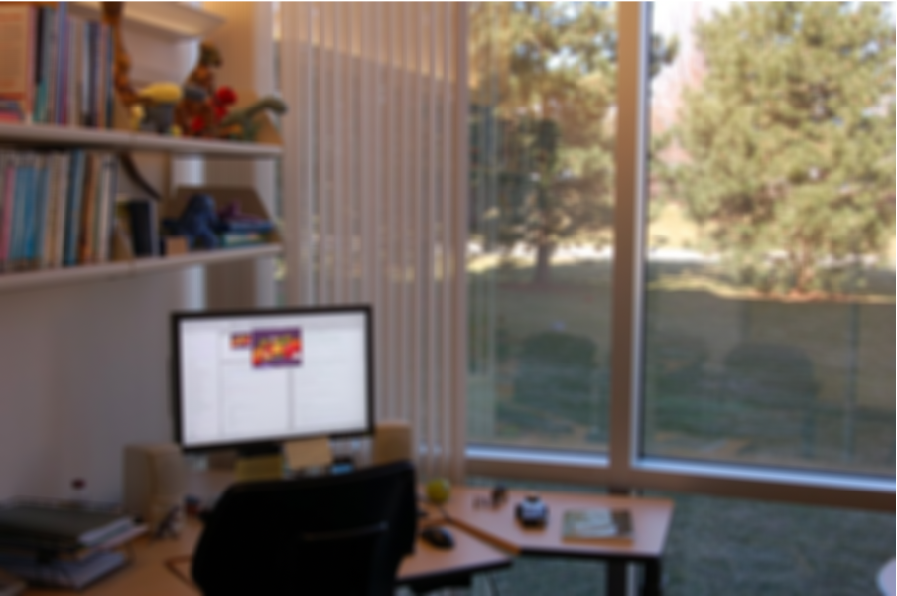}} \hspace{0.2cm}
	\subfloat[Reconstructed image]{\includegraphics*[width=0.32\linewidth]{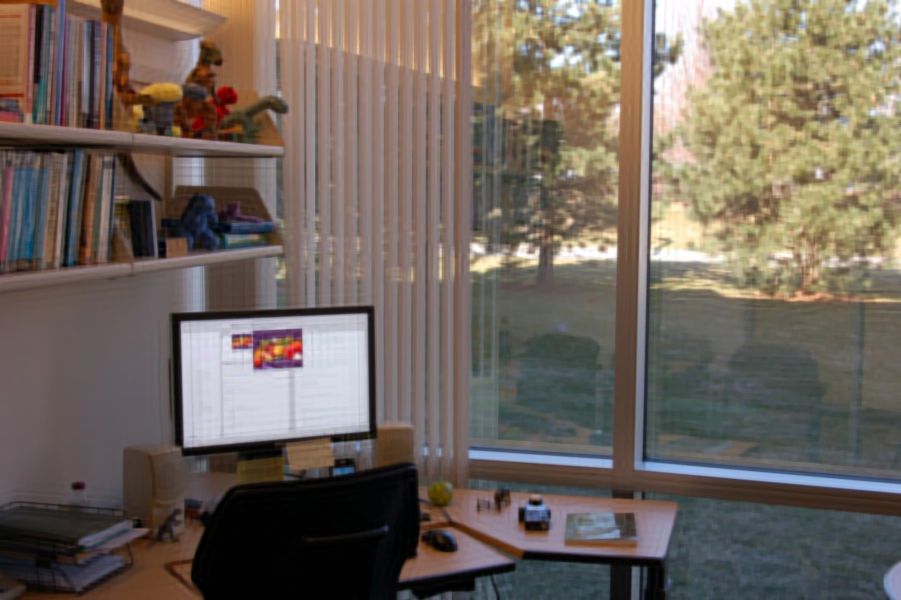}}
	\caption{\small The original image, the blurred and noisy image and the reconstructed image after 50 seconds cpu time.}
	\label{fig1}	
\end{figure}

We compared the Proximal AMA method with Tseng's AMA method. While in Proximal AMA a closed formula is available for the computation of $(q_1^{k+1},q_2^{k+1})_{k \geq 0}$, in AMA we solved the resulting optimization subproblem 
\begin{equation*}
(q_1^{k+1},q_2^{k+1})=\argmin_{q_1,q_2}\left\{g^*(q_1,q_2)-\langle x^{k+1},L^*(q_1,q_2)\rangle+\frac{1}{2}c_k\|A^*p^{k+1}+L^*(q_1,q_2)\|^2\right\}
\end{equation*}
in every iteration $k \geq 0$ by making some steps of the FISTA method (\cite{bete09}).

\begin{figure}[ht]
	\centering
	\captionsetup[subfigure]{position=top}
	{\includegraphics*[width=0.48\linewidth]{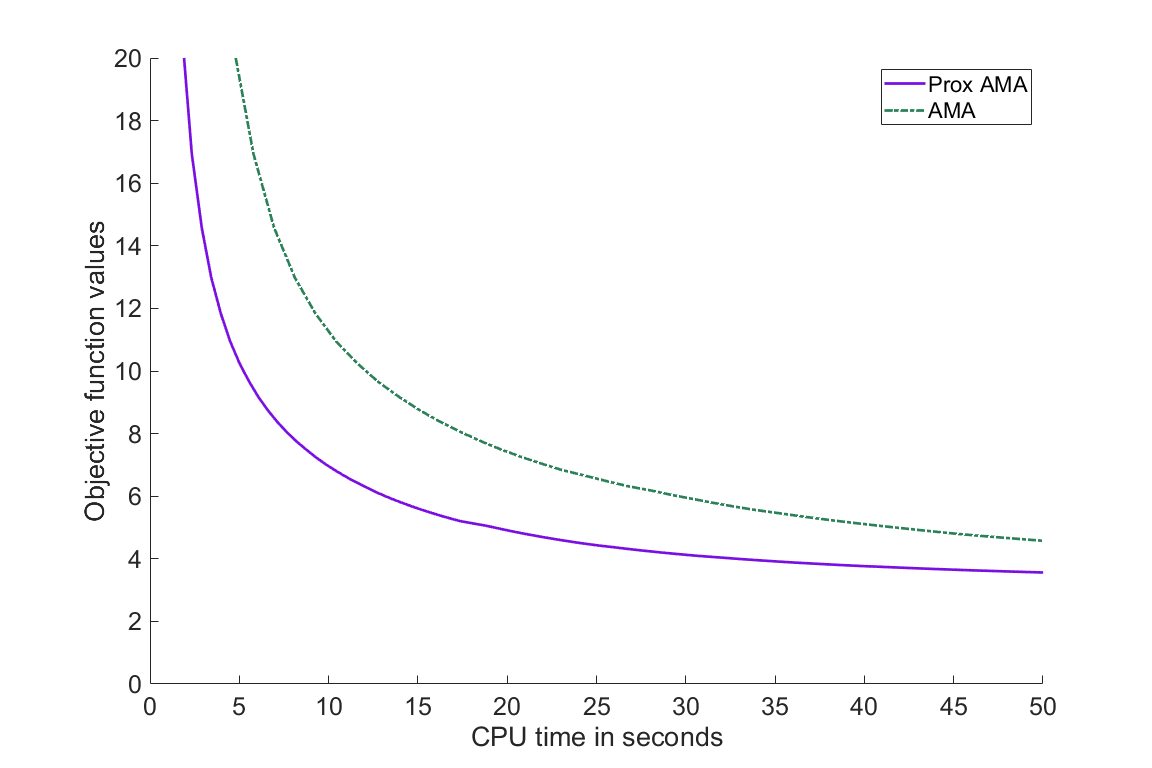}} \hspace{0.2cm}
	{\includegraphics*[width=0.48\linewidth]{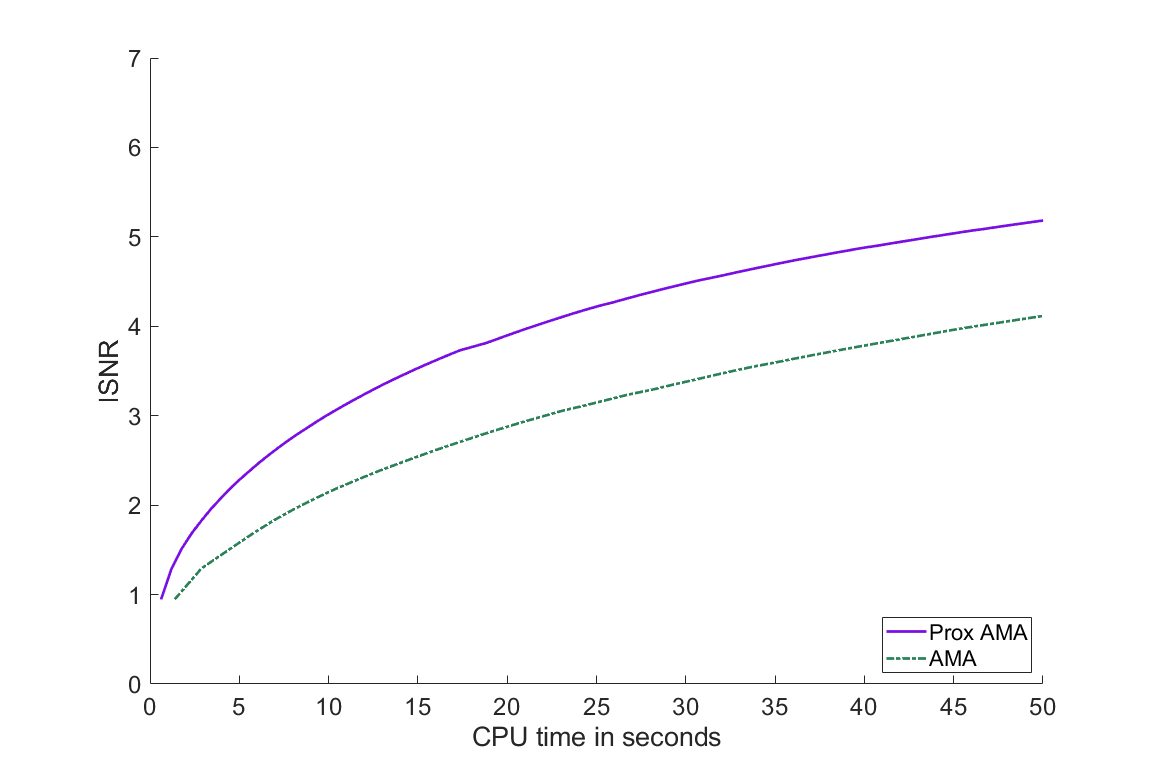}} 
	\caption{\small The objective function values and the ISNR values for the anisotropic TV and $\lambda=5\cdot10^{-5}$.}	
	\label{fig2}
\end{figure}

\begin{figure}[ht]
	\centering
	\captionsetup[subfigure]{position=top}
	{\includegraphics*[width=0.48\linewidth]{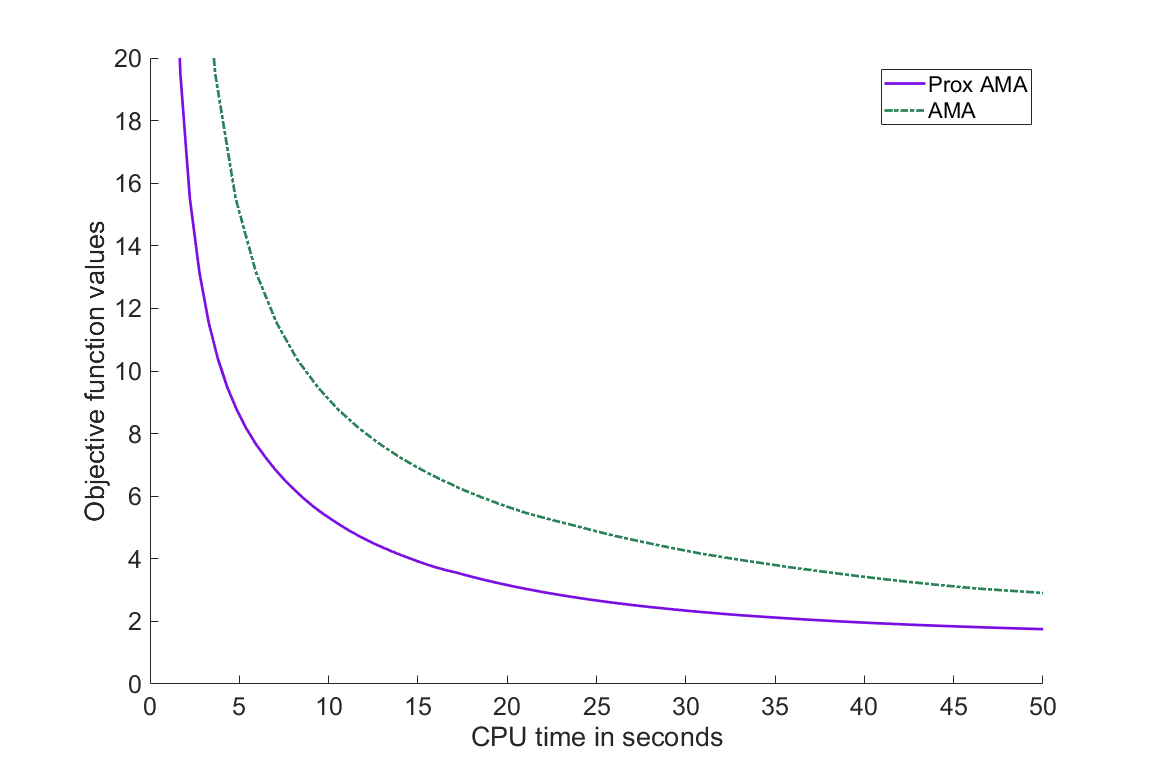}} \hspace{0.2cm}
	{\includegraphics*[width=0.48\linewidth]{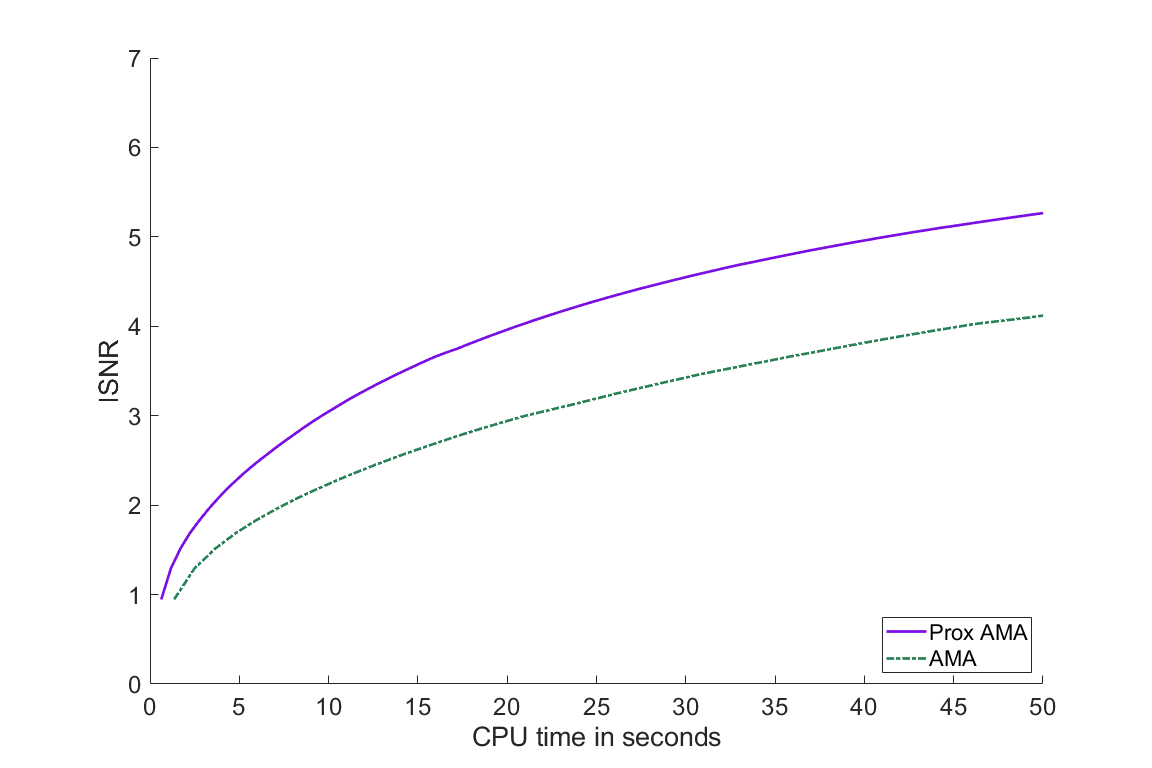}} 
	\caption{\small The objective function values and the ISNR values for the anisotropic TV and $\lambda=10^{-5}$.}	
	\label{fig3}
\end{figure}

\begin{figure}[ht]
	\centering
	\captionsetup[subfigure]{position=top}
	{\includegraphics*[width=0.48\linewidth]{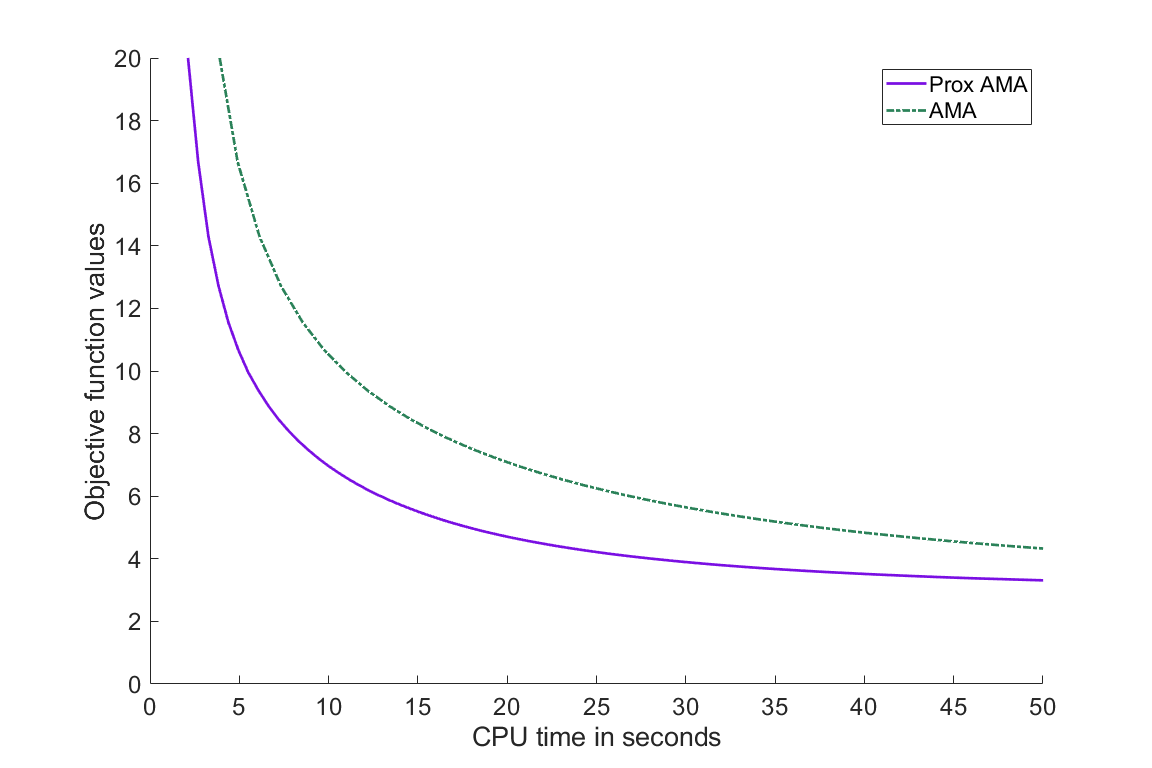}} \hspace{0.2cm}
	{\includegraphics*[width=0.48\linewidth]{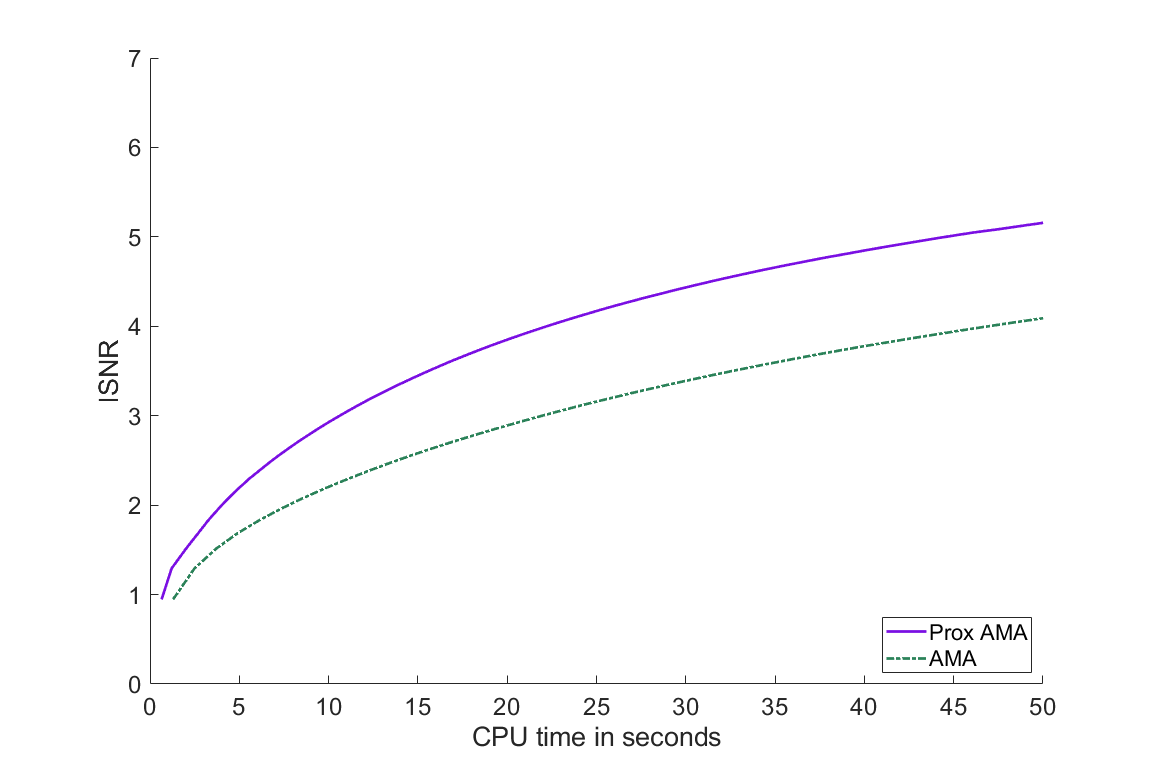}} 
	\caption{\small The objective function values and the ISNR values for the isotropic TV and $\lambda=5\cdot10^{-5}$.}	
	\label{fig4}
\end{figure}

\begin{figure}[ht]
	\centering
	\captionsetup[subfigure]{position=top}
	{\includegraphics*[width=0.48\linewidth]{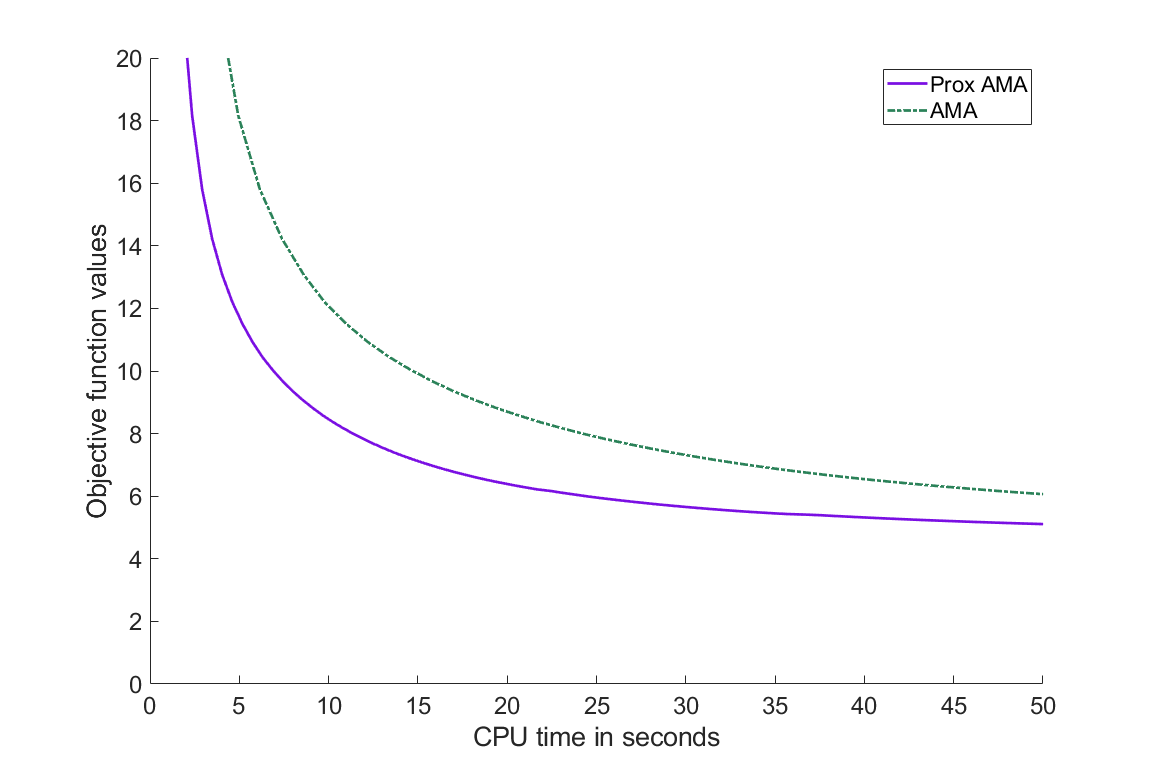}} \hspace{0.2cm}
	{\includegraphics*[width=0.48\linewidth]{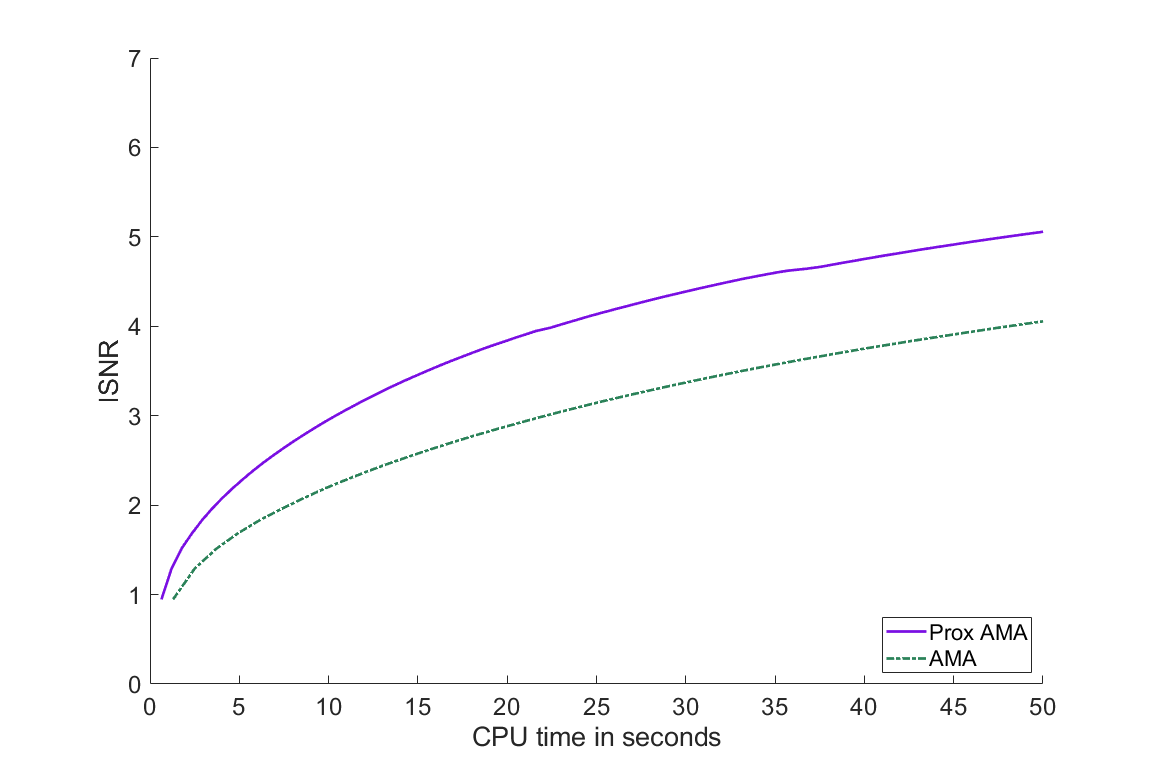}} 
	\caption{\small The objective function values and the ISNR values for the isotropic TV and $\lambda=10^{-4}$.}	
	\label{fig5}
\end{figure}

We used in our experiments a Gaussian blur of size $9 \times 9$ and standard deviation $4$, which led to an operator $A$ with $\|A\|^2=1$ and $A^*=A$. Furthermore, we added Gaussian white noise with standard deviation $10^{-3}$. 
We used for both algorithms a constant sequence of stepsizes $c_k=2 -10^{-7}$ for all $k \geq 0$. One can notice that  $(c_k)_{k \geq 0}$ fulfils \eqref{as:Prox-AMA:c_k}. In Proximal AMA we considered $\sigma_k=\frac{1}{8.00001 \cdot c_k}$ for all $k \geq 0$, 
which ensured that every matrix $M_2^k=\frac{1}{\sigma_k}\text{I}-c_kL^* L$ is positively definite for all $k \geq 0$. This is actually the case, if $\sigma_kc_k\|L\|^2<1$ for all $k \geq 0$.  In other words, we guaranteed that assumption (i) in Theorem \ref{th:Prox-AMA-convergence} is fulfilled. 

In the figures \ref{fig2} - \ref{fig5} we show how Proximal AMA and AMA perform when reconstructing the blurred and noisy MATLAB test image "office\_ 4" for different choices for the regularization parameter $\lambda$ and by considering both the anisotropic and isotropic 
total variation as regularization functionals. For all considered instances one can notice that Proximal AMA outperforms AMA in both the convergence behaviour of the sequence of the function values and of the sequence of ISNR (Improvement in Signal-to-Noise Ratio) values.

\subsection{Kernel based machine learning}\label{subsec32}

In this subsection we will describe the numerical experiments we carried out in the context of classifying images via support vector machines. 

The given data set consisting of $5570$ training images and $1850$ test images of size $28 \times 28$ was taken from the website \href{http://www.cs.nyu.edu/~roweis/data.html}{http://www.cs.nyu.edu/~roweis/data.html}. The problem we considered was to determine a decision function based on a pool of handwritten digits showing either the number five or the number six, labeled by $+1$ and $-1$, respectively (see Figure \ref{fig:classification-five-six}). To evaluate the quality of the decision function we compute the percentage of misclassified images of the test data set.
\begin{figure}[ht]	
	\centering
	\includegraphics*[width=0.35\textwidth]{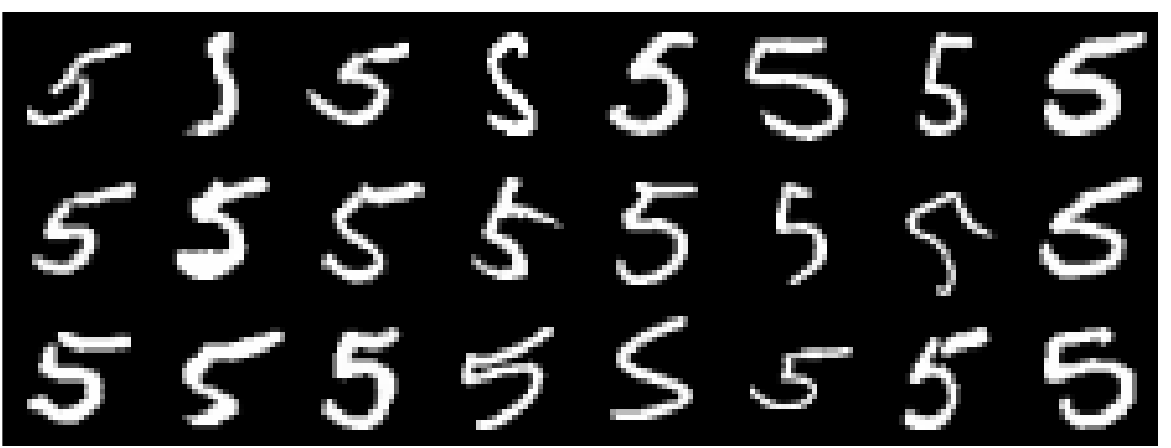} \hspace{2mm}
	\includegraphics*[width=0.35\textwidth]{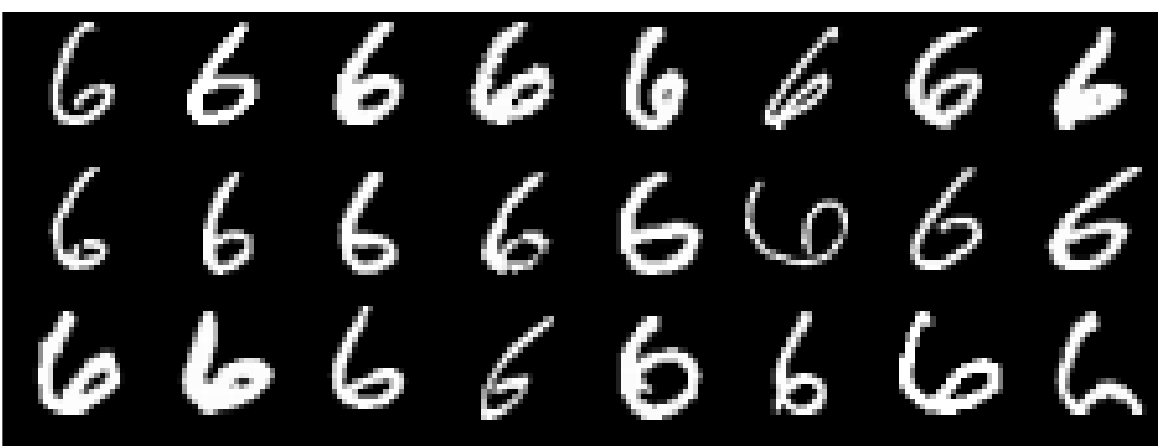}
	\caption{\small A sample of images belonging to the classes $+1$ and $-1$, respectively.}
	\label{fig:classification-five-six}	
\end{figure}

In order to describe the approach we used, let be
$$\Z=\{(X_1,Y_1),\ldots,(X_n,Y_n)\}\subseteq \R^d \times \{+1,-1\},$$
the given training data set. The decision functional $\verb"f"$ was assumed to be an element of the Reproducing Kernel Hilbert Space (RHKS) $\h_\kappa$, induced by the symmetric and finitely positive definite Gaussian kernel function
$$\kappa: \R^d \times \R^d \rightarrow \R, \  \kappa(x,y) = \exp \left( -\frac{\left\|x-y \right\|^2}{2 \sigma^2} \right).$$ 

By $K\in\R^{n\times n}$ we denoted the Gram matrix with respect to the training data set $\Z$,
namely, the symmetric and positive definite matrix with entries $K_{ij}=\kappa(X_i,X_j)$ for $i,j=1,\ldots,n$. To penalize the  deviation between the predicted value $\verb"f"(x)$ and the true value $y \in \{+1,-1\}$ we used the hinge loss functional $(x,y) \mapsto \max \{1-xy,0\}$. 

According to the Representer Theorem, the decision function $\verb"f"$ can be expressed as a kernel expansion in terms of the training data, i.e., $\verb"f"(\cdot) = \sum_{i=1}^n x_i \kappa(\cdot, X_i)$, where $x=(x_1,\ldots,x_n) \in \R^n$ is the optimal solution of the optimization problem
\begin{align}\label{eq50}
\min_{x \in \R^n}{\left\{\frac{1}{2}x^TKx + C\sum_{i=1}^n \max\{1-(Kx)_iY_i,0\} \right\}}.
\end{align}
Here, $C>0$ denotes the regularization parameter controlling the tradeoff between the loss function and the regularization term. Hence, in order to determine the decision function one has to solve the convex optimization problem \eqref{eq50}, which we write as
\begin{equation*}
\min_{x \in \R^n} \left\{f(x)+g(Kx)\right\}
\end{equation*}
or, equivalently,
\begin{align*}
\min_{x \in \R^n, z \in \R^n} &\left\{f(x)+g(z)\right\},\\
\text{s.t. } &Kx-z=0
\end{align*}
where $f:\R^n\to\R, f(x)=\frac{1}{2}x^TKx$, and $g:\R^n \to \R, g(z)=C\sum_{i=1}^{n}\max\{1-z_iY_i,0\}$.

Since the Gram matrix $K$ is positively definite, the function $f$ is $\lambda_{\min}(K)$-strongly convex, where $\lambda_{\min}(K)$ denotes the minimal eigenvalue of $K$, and differentiable, and it holds $\nabla f(x)= Kx$ for all $x \in \R^n$. For $p = (p_1,...,p_n) \in \R^n$, we have
\begin{equation*}
g^*(p)=\begin{cases}
\sum_{i=1}^{n}p_iY_i,& \text{if } p_iY_i \in [-C,0], i=1, \dots, n,\\
+\infty, & \text{otherwise.}
\end{cases}
\end{equation*}
Consequently, for every $\mu>0$ and $p = (p_1,...,p_n) \in \R^n$, it holds
\begin{equation*}
\text{Prox}_{\mu g^*}(x)=\left(\mathcal{P}_{Y_1[-C,0]}(p_1-\sigma Y_1),\dots,\mathcal{P}_{Y_n[-C,0]}(p_n-\sigma Y_n)\right),
\end{equation*}
where $\mathcal{P}_{Y_i[-C,0]}$ denotes the projection operator on the set $Y_i[-C,0], i=1,...,n$. 

We implemented Proximal AMA for $M_2^k=0$ for all $k \geq 0$ and different choices for the sequence $(M_1^k)_{k \geq 0}$. This resulted in an iterative scheme which reads for all $k \geq 0$:
	\begin{align}
	x^{k+1}&=\argmin_{x \in \R^n}\left\{f(x)-\langle p^k,K x \rangle + \frac{1}{2}\|x-x^{k}\|^2_{M_1^{k}}\right\} =(K+M_1^k)^{-1}(Kp^k+M_1^kx^{k}) \label{eq52}\\
	z^{k+1}&=\text{Prox}_{\frac{1}{c_k}g}\left(Kx^{k+1}-\frac{1}{c^k}p^k\right) =\left(Kx^{k+1}-\frac{1}{c^k}p^k\right)-\frac{1}{c_k}\text{Prox}_{c_k g^*}\left(c_kKx^{k+1}-p^k\right)\label{eq53}\\
	p^{k+1}&=p^k+c_k(-K x^{k+1}+ z^{k+1}).\nonumber
	\end{align}
We would like to emphasize that the AMA method updates the sequence $(z^{k+1})_{k \geq 0}$ also via \eqref{eq53}, while the sequence $(x^{k+1})_{k \geq 0}$, as $M_1^k=0$, is updated via $x^{k+1} = p^k$ for all $k \geq 0$. However, it turned out that the Proximal AMA where $M_1^k=\tau_k K,$ for 
$\tau_k>0$ and all $k \geq 0,$ performs better than the version with $M_1^k=0$ for all $k \geq 0$, which actually corresponds to the AMA method. In this case \eqref{eq52} becomes $x^{k+1}=\frac{1}{1+\tau_k} (p^k+\tau_kx^{k})$ for all $k \geq 0$. 

We used for both algorithms a constant sequence of stepsizes $c_k=2 \cdot \frac{\lambda_{\min}(K)}{\|K\|^2}-10^{-8}$ for all $k \geq 0$. 
The tables below show for $C=1$ and different values of the kernel parameter $\sigma$ that Proximal AMA outperforms AMA in what concerns the time and the number of iterates needed to achieve a certain value for a given fixed misclassification rate (which proved to be the best one among several obtained
by varying $C$ and $\sigma$) and for the RMSE (Root-Mean-Square-Deviation) for the sequence of primal iterates.

\vspace{12pt}
\begin{table}[H]
	\centering
	\begin{tabular}{lll}
		\hline
		Algorithm & misclassification rate at 0.7027 \% & RMSE $\leq 10^{-3}$ \\
		\hline
		Proximal AMA & 8.18s (145) & 23.44s (416)\\
		AMA &8.65s (153) & 26.64s (474)\\
		\hline
	\end{tabular}
	\caption{Performance evaluation of Proximal AMA (with $\tau_k=10$ for all $k \geq 0$) and AMA for the classification problem with $C=1$ and $\sigma=0.2$. The entries refer to the CPU times in seconds and the number of iterations.}
\end{table}

\begin{table}[H]
	\centering
	\begin{tabular}{lll}
		\hline
		Algorithm & misclassification rate at 0.7027 \% & RMSE $\leq 10^{-3}$ \\
		\hline
		Proximal AMA & 141.78s (2448) & 629.52s (10940)\\
		AMA &147.99s (2574) & 652.61s (11368)\\
		\hline
	\end{tabular}
	\caption{Performance evaluation of Proximal AMA (with $\tau_k=102$ for all $k \geq 0$) and AMA for the classification problem with  $C=1$ and $\sigma=0.25$. The entries refer to the CPU times in seconds and the number of iterations.}
\end{table}

\section{Conclusions and further research}\label{sec4}

The Proximal AMA method has the advantage over the classical AMA method that it allows to perform a proximal step for the calculation of $z^{k+1}$ as long as the sequence $M_2^k$ is chosen for all $k \geq 0$ appropriately. In this way one can avoid using in every iteration a minimization subroutine.
It also has more flexibility due to the presence of the smooth and convex functions $h_1$ and $h_2$. In addition, it allows to use proximal terms induced by variable metrics in the calculation of $x^{k+1}$, for all $k \geq 0$, too, which may lead to better performances, as shown in the numerical experiments on support vector machines classification.

In the future, it might be interesting to:

(1) carry out investigations related to the convergence rates for both the iterates and objective function values of Proximal AMA;  as emphasized in \cite{bocs17} for the Proximal ADMM algorithm, 
the use of variable metrics can have a determinant role in this context, as they may lead to dynamic stepsizes which are favourable to an improved convergence behaviour of the algorithm (see  also \cite{bocs15,ch-pck}). 

(2) consider a slight modification of Algorithm \ref{alg:prox-AMA-h}, 
by replacing \eqref{eq:prox-AMA-h-p-Update} with
\begin{equation*}p^{k+1} =  p^k+\theta c_k(b-Ax^{k+1}-Bz^{k+1}),\label{C-var-p-rel}\end{equation*}
where $\theta\in \left(0,\frac{\sqrt{5}+1}{2}\right)$ and to investigate the convergence properties of the resulting scheme; it has been noticed in \cite{fortinglowinski} that the numerical performances of the classical ADMM algorithm for convex optimization problems in the presence of a relaxation parameter  $\theta\in \left(1,\frac{\sqrt{5}+1}{2}\right)$ outperform the ones obtained when $\theta=1$. 

(3) embed the investigations made in this paper in the more general framework of monotone inclusion problems, as it was recently done in \cite{baco17} starting from the Proximal ADMM algorithm.

\end{document}